\newtheorem{theorem}{Theorem}[section]
\newtheorem{proposition}[theorem]{Proposition}
\theoremstyle{definition}
\newtheorem{definition}[theorem]{Definition}
\newtheorem{example}[theorem]{Example}
\newtheorem{remark}[theorem]{Remark}
\def\r{\mathbb R}
\def\h{\mathbb H}
\def\Hip{\mathbb H}
\let\hip=\Hip
\newcommand{\R}{\mathbb R}
\newcommand{\M}{\mathbb M}
\newcommand{\E}{\mathbb E}
\newcommand{\di}{\mathbb D}
\newcommand{\N}{\mathbb N}
\newcommand{\wt}{\widetilde}
\newcommand{\wh}{\widehat}
\renewcommand{\Im}{{\rm Im}}
\let\h=\hip
\def\rmd{\mathop{\rm d\kern -1pt}\nolimits}
\def\rme{\mathop{\rm e\kern -1pt}\nolimits}
\DeclareMathOperator{\dist}{dist}
\DeclareMathOperator{\diam}{diam}
\def\bel{ \medskip
 \centerline{$ \ast \hbox to 1.0cm{}\ast \hbox to 1.0cm{}\ast $}
}
\def\longerrightarrow{-\kern-5pt\longrightarrow}
\def\star{\lower 1pt\hbox{*}}
\def \nulset {
\raise 1pt\hbox{ \hskip -3pt$\not$\kern -0.2pt \raise
.7pt\hbox{${\scriptstyle\bigcirc}$}}}
\newcommand{\hd}{\mathbb{H}^2}
\newcommand{\hi}[1]{\mathbb{H}^#1}
\newcommand{\ov}[1]{\overline{#1}}
\let\leq=\leqslant
\let\geq=\geqslant
\begin{document}
\title[reflection principle]{A reflection principle for minimal surfaces in   
smooth three manifolds }

\author[R. Sa
Earp $\ $ and $\ $ E. Toubiana  ]{
 Ricardo Sa Earp and
Eric Toubiana}

 \address{Departamento de Matem\'atica \newline
  Pontif\'\i cia Universidade Cat\'olica do Rio de Janeiro\newline
Rio de Janeiro \newline
22451-900 RJ \newline
 Brazil }
\email{rsaearp@gmail.com}

\address{Institut de Math\'ematiques de Jussieu - Paris Rive Gauche \newline
Universit\'e Paris Diderot - Paris 7 \newline
Equipe G\'eom\'etrie et Dynamique,  UMR 7586 \newline
B\^atiment Sophie Germain \newline
Case 7012 \newline
75205 Paris Cedex 13 \newline
France}
\email{eric.toubiana@imj-prg.fr}

\thanks{
 Mathematics subject classification: 
 53C42, 49Q05, 35J25, 35J60.
\\
The  first and second authors were partially supported by CNPq
 of Brasil.
}

\date{\today}

\begin{abstract}
 We prove a reflection principle for minimal surfaces in smooth 
 (non necessarily analytic) three manifolds and we give an explicit  application when the ambient space is just a smooth  manifold. 
\end{abstract}

\keywords{Smooth Riemannian manifolds, minimal surfaces, geodesics of 
reflection, reflection principle, regularity of solutions of quasilinear 
elliptic equations up to the boundary.}

\maketitle

\section{Introduction}

In this paper we prove a reflection principle for minimal surfaces in a smooth (non 
necessarily analytic) setting. The precise statement is given in the Theorem 1.1  below. 
One of the motivations to seek such a reflection principle is the fact that there are many 
three dimensional smooth (non analytic) Riemannian manifolds  endowed with natural  
reflections about  a geodesic line (see, exemple 2.3-(3)).  We  derive an application  
when the ambient is just a smooth manifold, using  our main theorem combined with some 
classical theorems about area minimizing solution of the Plateau problem, see Proposition 
3.4.

\smallskip 

A form of the reflection principle says that if a minimal surface in 
$\R^3$ contains a segment 
of straight line $L$, then the minimal surface is invariant by reflection with respect to 
 L, see \cite{H-K}. 
Such a  form was generalized by Leung \cite[Thm. 1]{Le} for analytic 
manifolds.

\smallskip 

We are interested in  the knowledge of a 
reflection principle across  a geodesic 
line  in the boundary of a minimal surface. We assume the surface is contained 
in a smooth non necessarily analytic manifold. We assume also that the geodesic 
line admits a reflection in the ambient space (Definition \ref{D.reflection}).

\smallskip

First, we emphasize that the   ``reflection 
principle'' in the 
Euclidean space with only the usual hypothesis that the minimal surface contains 
a segment $L$ of a straight 
line in its (topological) boundary, is  not established. In fact, 
 even 
with the 
strongest assumption that the surface is an embedded disk up to the boundary 
segment $L$, as far as we know, there is no proof 
of the "reflection principle" in the Euclidean space.

\smallskip

Of course, when we impose some additional conditions a proof can be done. For 
example, when 
we know that the minimal immersion is conformal in the interior and continuous 
up to the boundary segment $L$ or when 
the surface with its boundary $L$ is a minimal graph, continuous up to  $L$. 

\smallskip

In fact,  the reflection principle for 
conformal  minimal immersions  in $\R^3$ is 
a generalization of the well-known Schwarz reflection principle for harmonic 
functions. 
The proof 
uses the fact that the coordinates of a 
conformal  minimal immersion  in Euclidean space are harmonic, then the Schwarz 
principle 
 for harmonic functions is applied. See an elegant 
deduction in \cite[Thm. 1, Sec. 4.8]{D-H-S} or in \cite[Lemma 7.3]{Osserman}.

\smallskip 

When the ambient space is the sphere $\mathbb S^3$, Lawson produced a proof  
following 
the same idea of the proof in the Euclidean case (which, in fact, 
holds in the three-dimensional 
hyperbolic space). The precise statement is as follows: When the conformal 
minimal  
immersion in the sphere $\mathbb S^3$ contains an arc of geodesic $L$ of the 
ambient space on its boundary and has $C^2$ regularity up to 
this arc, then the surface can be extended analytically 
by  
reflection in $L$ \cite[Prop. 3.1]{La}. The proof makes use of  a Lichtenstein 
theorem, see \cite[Thm. 1]{Hi-Mo}.

\smallskip 

The reflection principle  has been used by several authors (including  
the present authors) 
in the theory of minimal surfaces 
in 
homogeneous three-dimensional spaces, see for example Rosenberg \cite{Ros}, 
Abresh-Rosenberg \cite{A-R}, Menezes \cite{Men}, 
Mazet-Rodr\'\i guez-Rosenberg \cite{Ma-Rod-Ros}.

\smallskip 

On the other hand, the authors have established the  reflection principle  for 
 minimal vertical 
graphs, when the ambient space is the product space $\Hip^n \times  \R$,  where 
$\Hip^n $ is the
$n$-dimensional hyperbolic space,
see \cite[Lemma 3.6]{SE-T6}. The proof also works in $\R^n \times \R$.

Furthermore the authors use the reflection principle to 
construct Scherk type minimal hypersurface in $\Hip^n \times  \R$
\cite[Theorem 5.10]{SE-T6}.


\bigskip 

\bigskip

In the  statement of our Main Theorem below we use the notion of a reflection 
$I_\gamma$ about a 
geodesic $\gamma$ in a $C^\infty$ Riemannian manifold $(M,g)$. 
We denote by $U_\gamma \subset M$ 
the domain of definition  of $I_\gamma$, 
see Definition 
\ref{D.reflection}.

\begin{theorem}[Main Theorem]\label{T.Main.Theorem}
Let $(M,g)$ be a $C^\infty$ Riemannian three manifold. 
Let $\gamma\subset M$ be an open geodesic arc which admits a 
reflection 
$I_\gamma$. 

Let $S\subset U_\gamma$ be an embedded minimal surface. We assume that  
  $S\cup \gamma$ is a $C^1$ surface with 
boundary.

\smallskip

Then the reflection of $S$ about $\gamma$ gives rise to a $C^\infty$ 
continuation of 
$S$ across $\gamma$. That is, $S\cup \gamma \cup I_\gamma(S)$ is a smooth  
immersed minimal surface 
which is embedded near $\gamma$. 
\end{theorem}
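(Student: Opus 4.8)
The plan is to work in Fermi (geodesic normal) coordinates adapted to the geodesic arc $\gamma$, reduce the problem to a boundary regularity statement for a quasilinear elliptic PDE, and then prove $C^\infty$ extension across $\gamma$ by a reflection argument for the PDE. First I would set up coordinates $(x_1,x_2,x_3)$ on a neighborhood of a point $p\in\gamma$ in which $\gamma$ is a coordinate axis and $I_\gamma$ becomes the linear involution $(x_1,x_2,x_3)\mapsto(x_1,-x_2,-x_3)$; by the definition of a reflection (Definition \ref{D.reflection}) the metric $g$ is invariant under $I_\gamma$, so its coefficients $g_{ij}$ are even in $(x_2,x_3)$ jointly in the appropriate sense (that is, $g_{11},g_{22},g_{33},g_{23}$ are invariant and the mixed terms $g_{12},g_{13}$ change sign). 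Since $S\cup\gamma$ is a $C^1$ surface with boundary and $\gamma$ is a geodesic, a first reduction shows that $S$ meets $\gamma$ orthogonally along $\gamma$: the boundary curve is a geodesic of the ambient metric, and for a $C^1$ minimal surface with geodesic boundary the conormal must be tangent to... more precisely, the second-order behavior of the minimal surface equation forces the surface to be perpendicular to the fixed-point set. Granting this, I can write $S$ locally as a graph $x_3 = u(x_1,x_2)$ (or in a rotated frame) over a half-disk $\{x_2\ge 0\}$ in the $(x_1,x_2)$-plane, with $u\in C^1$ up to the boundary segment $\{x_2=0\}$ and $u=0$, $\partial_{x_2}u=0$ there.

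The function $u$ satisfies a quasilinear uniformly elliptic equation $Q(x,u,\nabla u, \nabla^2 u)=0$ in divergence form — the minimal surface equation for the graph in the metric $g$ — with $C^\infty$ coefficients in all arguments. The key structural point is that because the metric is $I_\gamma$-invariant, the differential operator $Q$ is compatible with the reflection $(x_1,x_2)\mapsto(x_1,-x_2)$, $u\mapsto -u$: if $u(x_1,x_2)$ solves the equation on the half-disk, then $\tilde u(x_1,x_2):=-u(x_1,-x_2)$ solves the same equation on the reflected half-disk. Since $u$ and $\tilde u$ agree to first order on the common boundary segment $\{x_2=0\}$ (both the value $0$ and the gradient vanish there), the function $U$ defined as $u$ on one side and $\tilde u$ on the other is $C^1$ across $\{x_2=0\}$ and is a weak solution of $Q(U)=0$ on the full disk. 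Standard interior regularity for quasilinear elliptic equations with smooth coefficients (e.g. Schauder theory bootstrapped from the De Giorgi–Nash–Moser $C^{1,\alpha}$ estimate, or directly the regularity results quoted in the literature on minimal graphs) then gives $U\in C^\infty$ on the full disk. In particular $u$ extends smoothly across $\gamma$, and $I_\gamma(S)$ glues to $S$ along $\gamma$ to form a $C^\infty$ immersed minimal surface, embedded near $\gamma$ because it is a smooth graph there.

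The main obstacle — and the step that needs the most care — is the passage from "$S\cup\gamma$ is merely $C^1$" to the graph representation with the correct boundary conditions, i.e. establishing that $S$ is orthogonal to $\gamma$ and that in suitable coordinates the defining function $u$ has vanishing gradient on $\{x_2=0\}$. A priori $C^1$ regularity does not by itself give a PDE one can differentiate; one must first upgrade $u$ to $C^{1,\alpha}$ up to the boundary using the minimal surface equation and the $C^1$ boundary data (a boundary De Giorgi–Nash–Moser / Ladyzhenskaya–Uraltseva type estimate), and only then is the reflected function $U$ genuinely a weak solution across the seam. I would handle the orthogonality by a blow-up / tangent-cone argument at a boundary point: the $C^1$ hypothesis gives a well-defined tangent plane along $\gamma$, and if that plane were not orthogonal to $\gamma$ the reflected surface $I_\gamma(S)$ would meet $S$ transversally along $\gamma$ and the union could not be a $C^1$ (let alone minimal) surface; alternatively one invokes the first variation formula, since the conormal of a minimal surface along a boundary geodesic lies in the fixed-point set of $I_\gamma$ only if the surface is perpendicular to $\gamma$. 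Once these reductions are in place, the reflection-and-elliptic-regularity core of the argument is essentially the classical Schwarz-type scheme, now carried out for the minimal surface operator rather than the Laplacian.
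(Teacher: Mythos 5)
Your overall route---Fermi-type coordinates in which $I_\gamma$ is linear, graph representation of $S$ over a half-disk, odd reflection of the height function, compatibility of the minimal-graph operator with $I_\gamma$ because $g$ is invariant, boundary H\"older/Schauder regularity for the quasilinear equation, and interior elliptic regularity to conclude---is the same scheme the paper follows. But the step you single out as \emph{``the main obstacle''} rests on a geometric misconception, and the supporting arguments you offer for it are wrong.

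You assert that minimality, or the reflection, forces $S$ to meet $\gamma$ \emph{orthogonally}, and that one needs $\partial_{x_2}u\equiv 0$ along $\{x_2=0\}$. Neither claim is correct, and neither is needed. The fixed-point set of $I_\gamma$ is the one-dimensional curve $\gamma$, not a hypersurface, so ``perpendicular to the fixed-point set'' has no content here (you are importing intuition from reflection across a plane). Since $S\cup\gamma$ contains $\gamma$, its tangent plane at each $q\in\gamma$ contains $T_q\gamma$, which is exactly the $+1$-eigenspace of $dI_\gamma(q)$; any $2$-plane containing the axis of a $\pi$-rotation is automatically invariant, so $I_\gamma(S)$ has the \emph{same} tangent plane as $S$ at every point of $\gamma$ with no further hypothesis. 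Your ``transversality'' worry and the ``first-variation / conormal'' argument do not apply. Likewise, the odd reflection $\tilde u(x_1,x_2)=-u(x_1,-x_2)$ gives $\partial_2\tilde u(x_1,0)=+\partial_2 u(x_1,0)$, so the normal derivatives match \emph{automatically}; the only identities used for $C^1$ matching are $u|_{x_2=0}\equiv 0$ (forced because $\gamma\subset\{x_3=0\}$) and hence $\partial_1 u|_{x_2=0}\equiv 0$. The paper never establishes nor needs $\partial_2 u|_{x_2=0}=0$ (it only has $Du(0,0)=0$ by its choice of the reference surface $\Sigma$ tangent to $S$ at the single base point $p$).

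Where your proposal and the paper genuinely diverge is at the seam-regularity step, and there both routes work. You glue $u$ and $\tilde u$ into a $C^1$ function, observe that it is a weak solution of the (divergence-structure) minimal-graph equation on the full disk, and invoke interior regularity. The paper instead first upgrades $u$ to $C^\infty$ up to the boundary segment via the boundary H\"older estimate (its Theorem~\ref{T.Holder-Trudinger} and Remark~\ref{R.local}), then matches the classical second derivatives across $\{x_2=0\}$ directly: $u_{11}$ and $u_{12}$ match for trivial reasons, while for $u_{22}$ it plugs both $u$ and $v:=-u(x_1,-x_2)$ into the minimal equation along $\{x_2=0\}$ and uses the parity of the Christoffel symbols under $I_\gamma$ to conclude $u_{22}(x_1,0)=0$, so the glued $w$ is $C^2$, hence $C^\infty$ by interior Schauder. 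Your weak-solution shortcut avoids this explicit Christoffel computation, but then the statement ``$Q$ is compatible with $(x_1,x_2,u)\mapsto(x_1,-x_2,-u)$'' is carrying the entire geometric weight, and you should justify it either by the invariance of the area functional under the ambient isometry $I_\gamma$ or by the parity identities for $g_{ij}$ and $\Gamma^k_{ij}$ that you stated but did not use. Finally, note that the paper does not use naive Fermi coordinates about $\gamma$: it builds the chart from the exponential map of the \emph{parallel transported inner conormal $\nu$ of $S$} (not an arbitrary unit normal to $\gamma$), precisely so that $S\cup\gamma$ is a graph over $\{x_3=0\}$ near $p$; a pure tube coordinate system about $\gamma$ would not guarantee this.
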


\medskip

\begin{theorem}[Main Theorem bis]\label{T.Main.Theorem bis}
 Let $M$ be a $C^\infty$ Riemannian three manifold and 
let $\gamma\subset M$ be an open geodesic arc which admits a 
reflection $I_\gamma$. 

Let $S\subset U_\gamma$ be an embedded minimal surface such that 
$S\cup \gamma$ is a $C^0$ surface with 
boundary.
We assume that $S\cup \gamma$ is the 
graph of a $C^0$ function  $x_3 =f(x_1,x_2)$ for some 
local coordinates $(x_1,x_2,x_3)$ of $M$. 

We assume also that 
$f$ restricted to the projection of $S$ is a $C^2$ function with bounded 
gradient.

Then the reflection of $S$ about $\gamma$ gives rise to a $C^\infty$ 
continuation of 
$S$ across $\gamma$. That is, $S\cup \gamma \cup I_\gamma(S)$ is a smooth  
immersed minimal surface 
which is a graph near $\gamma$.
\end{theorem}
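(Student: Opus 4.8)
The idea is to reduce the $C^0$-graph case to a situation where the PDE-regularity machinery behind the $C^1$ Main Theorem (Theorem~\ref{T.Main.Theorem}) applies, the point being that once we know $S\cup\gamma$ is actually $C^1$ up to $\gamma$, the first theorem finishes the job. So the substance is: upgrade the regularity of $f$ from "$C^0$ up to $\gamma$, $C^2$ in the interior with bounded gradient" to "$C^1$ up to $\gamma$" (indeed $C^\infty$ up to $\gamma$, after reflection). First I would fix local coordinates $(x_1,x_2,x_3)$ in which $\gamma$ is a coordinate segment — since $\gamma$ is a geodesic admitting the reflection $I_\gamma$, we may choose the coordinates adapted to $I_\gamma$ so that $I_\gamma$ is (in these coordinates) the Euclidean reflection fixing the $x_1$-axis, say, and $\gamma\subset\{x_2=x_3=0\}$; the metric $g$ is then $C^\infty$ and invariant under this reflection. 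In such coordinates the graph $x_3=f(x_1,x_2)$ being minimal means $f$ solves a quasilinear elliptic equation $\mathrm{div}\,\big(a(x,f,\nabla f)\big)+b(x,f,\nabla f)=0$ (the minimal surface equation for the metric $g$) on the open set $\Omega\subset\mathbb R^2$ which is the projection of $S$, with $\gamma$ projecting to a boundary arc $\gamma_0\subset\partial\Omega$, and $f$ extends continuously to $\gamma_0$ with $\nabla f$ bounded on $\Omega$.

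Next, the key analytic step: show $f$ is $C^{1,\alpha}$ up to $\gamma_0$. Because $\nabla f$ is bounded, the equation is uniformly elliptic along $\Omega$, and the values of $f$ on $\gamma_0$ are $C^\infty$ (they come from the parametrization of the geodesic $\gamma$), so we have a quasilinear uniformly elliptic Dirichlet problem with smooth boundary data on the smooth boundary arc $\gamma_0$. Standard boundary regularity for quasilinear elliptic equations (Ladyzhenskaya–Uraltseva / Gilbarg–Trudinger, Chapter 13, combined with De Giorgi–Nash–Moser for the $C^\alpha$ estimate on $\nabla f$) then gives $f\in C^{1,\alpha}(\Omega\cup\gamma_0)$, and then Schauder bootstrapping gives $f\in C^\infty(\Omega\cup\gamma_0)$. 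I expect \emph{this} boundary-regularity step to be the main obstacle — one must be careful that the free boundary arc $\gamma_0$ really is smooth (it is, being the projection of the smooth geodesic $\gamma$) and that the mixed boundary condition "graph over $\gamma_0$ with prescribed smooth trace" is genuinely a Dirichlet condition to which the cited theorems apply; the bounded-gradient hypothesis is exactly what is needed to avoid the obstruction that the minimal graph could become vertical at the boundary.

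Once $f\in C^\infty(\Omega\cup\gamma_0)$, the surface $S\cup\gamma$ is a $C^\infty$ — in particular $C^1$ — surface with boundary, sitting inside $U_\gamma$, and $\gamma$ is a geodesic admitting the reflection $I_\gamma$. Therefore Theorem~\ref{T.Main.Theorem} applies verbatim and yields that $S\cup\gamma\cup I_\gamma(S)$ is a smooth immersed minimal surface, embedded near $\gamma$. Finally, to see it is a \emph{graph} near $\gamma$ — not merely embedded — one observes that $I_\gamma$ in the adapted coordinates sends graphs over $\Omega$ to graphs over $I_\gamma(\Omega)$ (the reflection is, in these coordinates, an isometry that preserves the "vertical" $x_3$-direction structure along $\gamma$ up to first order), so the union of the two graphs, which meet $C^1$-tangentially along $\gamma_0$ by the first theorem, is again a single graph $x_3=\widetilde f(x_1,x_2)$ over $\Omega\cup\gamma_0\cup I_\gamma(\Omega)$ in a neighborhood of $\gamma$, with $\widetilde f$ obtained by the reflection formula; smoothness of $\widetilde f$ across $\gamma_0$ is precisely the smoothness asserted by Theorem~\ref{T.Main.Theorem}. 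This completes the proof.
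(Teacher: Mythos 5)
Your overall strategy matches the paper's: use the bounded-gradient hypothesis to invoke the boundary H\"older regularity of Theorem~\ref{T.Holder-Trudinger} (Trudinger's extension of the Ladyzhenskaya--Ural'tseva estimates), conclude $f\in C^{1,\tau}$ up to the projected arc $\gamma_0$, and then hand off to Theorem~\ref{T.Main.Theorem}. That is exactly what the paper does, in two lines.

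There is, however, a genuine gap in the \emph{order} of your steps. You begin by changing to Fermi/adapted coordinates in which $I_\gamma$ is the Euclidean reflection, and only then claim that $S$ is a graph $x_3=f(x_1,x_2)$ with bounded gradient and apply boundary regularity. But the hypothesis of the theorem only gives you bounded gradient in the \emph{original, unspecified} coordinates $(x_1,x_2,x_3)$: the graph property, and a fortiori the bound on $|\nabla f|$, is not coordinate-invariant. A smooth change of coordinates near $\gamma$ sends $S$ to another surface, and the tangent planes of $S$ being uniformly bounded away from $\partial_{x_3}$ (which is what bounded gradient means) does not entail that they are bounded away from the new vertical direction $\partial_{y_3}$; in principle the reparametrized graph function could have unbounded gradient near the boundary, and you would no longer control the ellipticity constants needed for Theorem~\ref{T.Holder-Trudinger}. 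The clean route, and the one the paper takes, is to run the boundary regularity argument in the \emph{given} coordinates (where the bounded-gradient hypothesis lives), obtain $f\in C^{1,\tau}(\Omega\cup\gamma_0)$, hence $S\cup\gamma$ is a genuine $C^1$ surface with boundary, and only then invoke Theorem~\ref{T.Main.Theorem}, whose proof constructs the adapted coordinates internally and, since $S\cup\gamma$ is now $C^1$, represents $S$ as a bona fide $C^1$ graph in them. Your coordinate change before regularity is an unnecessary detour whose justification you do not supply. Apart from this ordering issue, the remaining content (elliptic boundary regularity with bounded gradient, Schauder bootstrapping, reduction to the $C^1$ Main Theorem) is correct and is the paper's argument.
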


\bigskip 

We point out that a crucial tool in the proof of the above theorems is a 
 H\"older gradient regularity up to the boundary for solutions of 
 Dirichlet problems for quasilinear elliptic equations, see Theorems 
\ref{T.Holder-Trudinger}, \ref{T.Schauder} and Remark \ref{R.local}.

\subsection*{Acknowledgements}
{\Small   The second author wishes
to thank the {\em Departamento de Matem\'atica da
PUC-Rio} for their kind hospitality. }

\section{Analytic and geometric background} 

\subsection{Geodesic arc of reflection}
$   $

Throughout this paper, $M$ is  a connected $C^\infty$ manifold of dimension 
three, 
equipped with a $C^\infty$ metric $g$.

\bigskip 

\begin{definition}\label{D.reflection}
Let $(M,g)$ be a complete $C^\infty$ Riemannian three manifold.   

We say that an open geodesic arc  $\gamma\subset M$ admits a 
{\em reflection} if there exist an oriented open subset $U_\gamma\subset M$ 
containing 
$\gamma$, and 
a non trivial isometry $I_\gamma : (U_\gamma,g)\longrightarrow (U_\gamma,g)$ 
such that 
\begin{itemize}
\item $I_\gamma$ is orientation preserving,

 \item $I_\gamma (p) =p$ for any $p\in \gamma$,
 
\item $ I_\gamma \circ I_\gamma= {\rm Id}_{U_\gamma}$.
\end{itemize}

\end{definition}
 
Next we describe some relevant examples. 

\begin{example}
\begin{enumerate}
 \item We denote by $\hd$ the hyperbolic plane and $g_\h$ 
 its hyperbolic metric.

 Let $M= \hd \times \R$  endowed  with the product metric 
 $g:=g_\h + dt^2$.
 
 The natural reflections about vertical or horizontal geodesic lines 
in $\hd\times \R$ satisfy 
the conditions of Definition \ref{D.reflection}. We observe that there is no 
other geodesic 
lines of reflection.

\smallskip 

\item For any $\kappa \leq 0$ we denote by $\M^2(\kappa)$ the complete and 
simply connected surface with constant intrinsic curvature $\kappa$.

We set  $M=\E (\kappa, \tau)$, $\kappa \leq 0$, $\tau >0$, the simply 
connected and complete homogeneous three manifold, see for example 
\cite{Daniel}. More precisely, $\E (\kappa, \tau)$ has a four dimensional 
isometry group, it is a fibration over  $\M^2(\kappa)$, 
the canonical  projection 
$\E (\kappa, \tau) \longrightarrow \M^2(\kappa)$ 
is a Riemannian submersion and the bundle curvature is 
$\tau$.

We treat separately the cases $\kappa <0$ and $\kappa=0$.
\begin{enumerate}
 \item We suppose $\kappa< 0$, (in particular for 
 $\kappa=-1$ we have $\E (-1, \tau)=\wt{\rm PSL}_2(\R,\tau)$).

 We choose the disk model for $\M^2(\kappa)$, that is the open disk of radius 
 $2/\sqrt{-\kappa}$. We denote  : $\M^2(\kappa):=\di (2/\sqrt{-\kappa})$  
provided  with the metric 
\begin{equation*}
 ds^2=\lambda^2 (x,y) (dx^2 +dy^2),\ \ \mathrm{where} \ \ 
\lambda (x,y)=\frac{1}{1+\kappa \frac{x^2+y^2}{4}}.
\end{equation*}
Then we have  $\E (\kappa, \tau)=\di (2/\sqrt{-\kappa})\times \R$ provided with 
the metric 
\begin{align*}
 g &=    \lambda^2 (dx^2+dy^2) +\Bigl(- \frac{2\tau}{\kappa}
\frac{\lambda_y}{\lambda} dx +\frac{2\tau}{\kappa}
\frac{\lambda_x}{\lambda} dy +dt\Bigr)^2 \\
&= \lambda^2 (dx^2+dy^2) + \Bigl( \lambda \tau 
\big(ydx -xdy \big) + dt\Bigr)^2.
\end{align*}
The isometries of $(\E (\kappa, \tau), g)$ are given by
(setting $z=x+iy$)
\begin{equation}
F(z,t)=(f(z), t+ \frac{2\tau}{\kappa} \arg \frac{\partial f}{\partial z} (z) +c)
\end{equation}  
where $f$ is a positive isometry of  $(\di (2/\sqrt{-\kappa}), ds^2)$, 
and 
\begin{equation}
G(z,t)=( g(z), -t - \frac{2\tau}{\kappa} \arg \frac{\partial \, 
\bar{g}}{\partial z} (z) +c),
\end{equation} 
where $g$ is a negative isometry of  $(\di (2/\sqrt{-\kappa}), ds^2)$,
and $c \in \R$. Observe that any isometry is orientation preserving. 

In particular $F(z,t)=(-z,t)$ is an isometry. Set $L_0:=\{(0,t), \ t\in \R\}$. 
Observe that each point of $L_0$ is a fixed point for $F$. Therefore $L_0$ is a 
geodesic line. Since $F\circ F={\rm Id}$, $F$ is a reflection about $L_0$.  

Now let $z_0\in \di (2/\sqrt{-\kappa})$ be any point. Let $h$ be a positive 
isometry of $\di (2/\sqrt{-\kappa})$ such that $h(z_0)=0$. We set 
$H(z,t)=(h(z),t+ \frac{2\tau}{\kappa} \arg \frac{\partial h}{\partial z} 
(z))$ and 
$L_{z_0}:=\{(z_0,t), \ t\in \R\}$. Note that for any $p\in L_{z_0}$ we have 
$H(p)\in L_0$. Therefore setting $I_{z_0}:=H^{-1}\circ F \circ H$ we get for 
any $p\in L_{z_0}$
\begin{equation*}
 I_{z_0} (p)= H^{-1} \Big(F\big( H(p) \big) \Big) = H^{-1} \big(H(p)   \big)=p.
\end{equation*}
We conclude as before that $L_{z_0}$ is a geodesic line and that $ I_{z_0}$ 
is a reflection about $L_{z_0}$. 

\smallskip 

Now we set $D_y : \{(0, y,0), \ - 2/\sqrt{-\kappa} < y < 2/\sqrt{-\kappa}  \}
\subset \di (2/\sqrt{-\kappa}) \times \{0  \}$. We consider the isometry 
$G(z)=(-\ov z, -t)$. Observe that $G(p)=p$ for any $p \in D_y$. Therefore 
$D_y$ is a geodesic line and, since $G\circ G ={\rm Id}$, we get that $G$ is a 
reflection about $D_y$.

For any $\theta \in [0,\pi)$ we set 
$D_\theta:= 
\{ (se^{i\theta}, 0),\ - 2/\sqrt{-\kappa} < s < 2/\sqrt{-\kappa} \}$, 
thus $D_y=D_{\pi/2}$. Observe that for any $\alpha\in \R$ the map 
$R_\alpha (z,t):= (e^{i\alpha}z, t)$ is an isometry. Since 
$R_{\pi/2 - \theta}(D_\theta)=D_y$, we get that the map 
$G_\theta:=R^{-1}_{\pi/2 - \theta}\circ G\circ R_{\pi/2 - \theta}$ fixes any 
point of $D_\theta$. Thus $D_\theta$ is a geodesic line and $G_\theta$ is a 
reflection about $D_\theta$.
 
 We can prove in the same way that  any horizontal geodesic  admits a reflection.

\medskip 

\item We suppose now $\kappa=0$, therefore 
we have that $\E (0, \tau)= {\rm Nil}_3 (\tau)$ is the Heisenberg group and it 
can be viewed as $\R^3$ with the metric 
\begin{equation*}
 g_\tau=dx^2+dy^2+\left(\tau(ydx-xdy)+dt\right)^2.
\end{equation*}
The isometries of $(\R^3, g_\tau)$ are (setting $z=x+iy$)

\begin{equation}
  F(z,t)=(e^{i\theta} z+a+ib, t +\tau \Im (a-ib)e^{i\theta} z +c)
\end{equation}
and
\begin{equation}
G(z,t)=(e^{i\theta} \ov z+a+ib, -t -\tau \Im (a+ib)e^{-i\theta} z +c)
\end{equation}
where $a,b,c,\theta \in \R$ are any real numbers. 
 Observe again that any isometry is orientation preserving.
 
Arguing as in the case $\kappa<0$, it can be shown  that the Euclidean lines 
$L_{z_0}:=\{(z_0,t),\ t\in\R  \}$ and 
$D_\theta:=\{(s e^{i\theta},0),\ s\in \R  \}$ are geodesic lines which admit a 
reflection, for any 
$z_0=(x_0,y_0)\in \R^2$ and any $\theta\in [0,\pi)$. 

Any horizontal geodesic  admits a reflection.
\end{enumerate}

\medskip

\item At last, it is not difficult to construct  smooth and non analytic 
three 
manifolds having geodesic lines of reflection. For example 
let $M^2$ be a smooth Riemannian 
surface. 
\begin{enumerate}
 \item Let $\gamma\subset M^2$ be a geodesic arc. Assume that there exist an open 
oriented neighborhood 
$V$ of $\gamma$ in $M^2$ and a non trivial isometry $i_\gamma : V\longrightarrow V$ such that 
\begin{itemize}
 \item $i_\gamma$ reverses the orientation,
 \item $i_\gamma (p)=p$ for any $p\in \gamma$,
 \item $i_\gamma \circ i_\gamma = Id_V$.
 \end{itemize}
Then setting $U=V\times \R\subset M^2 \times \R:= M^3$ and $I_\gamma( x,t)= (i_\gamma (x), -t)$, we see that 
$\gamma$ admits also a reflection 
in the  Riemannian product $M^2\times \R$.

\item Now assume that there exist a point $p_0\in M^2$, an open oriented neighborhood 
$V\subset M^2$ of $p_0$ and a non trivial  isometry $\varphi : V\longrightarrow V$ such that 
\begin{itemize}
 \item $\varphi$ is orientation preserving,
 \item $\varphi (p_0)=p_0$,
 \item $\varphi \circ \varphi = Id_V$.
\end{itemize}
Then setting $U=V\times \R\subset M^2 \times \R:= M^3$ and $I_\gamma( x,t)= (\varphi (x), t)$, we see that 
$\gamma:= \{p_0\}\times \R$ admits also a reflection 
in the  Riemannian product $M^2\times \R$.
\end{enumerate}


 \end{enumerate}
\end{example}

\bigskip 

\subsection{Boundary regularity}
$  $

 \begin{definition}\label{D.elliptic.quasi}
 Let $\Omega \subset \R^n$, $n\geq 2$, be a  domain and let $Q$  be 
 a second order quasilinear  operator of the following form 
 \begin{equation*}
 Q(u) := \sum_{i,j=1}^n a^{ij}(x,u,Du) D_{ij}u + b(x,u,Du), 
\end{equation*}
where $x\in \Omega$, and the functions 
$a^{ij}, b$ are defined and $C^1$ on $\ov \Omega \times \R\times \R^n$. 

We assume that $(a^{ij} (x,z,p))_{1\leq i,j \leq n}$ is a symmetric matrix 
for any $(x,z,p)\in \ov \Omega \times \R\times \R^n$. 
We denote by 
$\lambda(x,z,p)$, respectively $\Lambda(x,z,p)$, the minimum eigenvalue, 
respectively the maximum eigenvalue, of 
the symmetric matrix $(a^{ij} (x,z,p))$.

We say that $Q$ is an {\em elliptic operator}, if $0< \lambda(x,z,p)$ for 
any 
$(x,z,p)\in \ov \Omega \times \R\times \R^n$.

Assume now that $\Omega$ is a bounded domain. Then, by  
 continuity, for any $K>0$, there exist 
constant numbers 
$0 < \lambda_K \leq  \Lambda_K$ and $\mu_K >0$ such that 
\begin{align*}
 0 < \lambda_K \leq \lambda(x,z,p) \leq \Lambda(x,z,p) \leq  \Lambda_K, \\
\Big(\lvert a^{ij}\lvert + \lvert D_{p_k}a^{ij}\lvert + \lvert D_z 
a^{ij}\lvert 
 + \lvert D_{x_k} a^{ij}\lvert +  \lvert b\lvert \Big)(x,z,p) \leq \mu_K, 
\end{align*}
 for any $x\in \ov \Omega$ and $(z,p)\in \R\times \R^n$ satisfying 
$\vert z\vert + \vert p\vert \leq K$. 
\end{definition}

A crucial  ingredient in the proof of the Main Theorem is a uniform global 
H\"older 
estimates 
for the gradient for a   solution  of a general second order 
elliptic quasilinear equation. This can be seen as an 
 extension of the well-know Ladyzhenskaya
and Ural'tseva fundamental global  a priori H\"older estimates 
\cite[Chapter IV, Theorem 6.3]{L-U}.

\begin{theorem}\label{T.Holder-Trudinger}
Let $\Omega \subset \R^n$, $n\geq 2$, be a bounded domain with $C^{2}$ 
boundary, 
and let $Q$ be a quasilinear operator as above with 
$a^{ij}, b \in C^1(\ov \Omega \times \R\times \R^n)$.

 Let 
 $u\in C^0(\ov \Omega) \cap C^2(\Omega)$ be a 
function satisfying 
\begin{equation*} 
\begin{cases}
  Q(u)=0 \quad \text{on} \quad \Omega, \\
  u=\varphi \quad \text{on} \quad \partial\Omega,
\end{cases}
\end{equation*}
where $\varphi \in C^{2}(\ov \Omega)$.

Assume there is $K>0$ such that $\vert u\vert + \vert Du\vert \leq K$ on 
$\Omega$. 

\smallskip 

Then, there exists  a constant  
  $\tau \in (0,1)$, such that  
  $u\in C^{1,\tau}(\ov \Omega)$. 
 
More precisely, $u\in C^1(\ov \Omega)$ and there exist positive numbers 
$C=C(n, K, \lambda_{\wh K}, \mu_{\wh K},
\lvert \varphi\lvert_{C^2(\ov\Omega)},\Omega)$ and 
$\tau=\tau(n, K, \lambda_{\wh K}, \mu_{\wh K},
\lvert \varphi\lvert_{C^2(\Omega)}  ) \in (0,1)$ 
such that 
for any $x_1 ,x_2\in \ov \Omega$ we have 
\begin{equation}\label{Holder global}
 \left\lvert Du (x_1) - D u(x_2) \right\lvert \leq C \, 
 \lvert x_1 - x_2\lvert^\tau,
\end{equation}
where 
 $\wh K:= K + \lvert \varphi\lvert_{C^1(\Omega)}$. 
\end{theorem}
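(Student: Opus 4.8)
The plan is to reduce the global estimate to known interior estimates plus a boundary estimate, handling the latter by a flattening-and-reflection argument. First I would recall that interior $C^{1,\alpha}$ estimates for solutions of quasilinear elliptic equations of this form are classical (Ladyzhenskaya--Ural'tseva, or Gilbarg--Trudinger Ch.~13), so the only new content is uniform control of $Du$ up to $\partial\Omega$. Since $\partial\Omega$ is $C^2$ and $\varphi\in C^2(\overline\Omega)$, I would replace $u$ by $u-\varphi$ to reduce to the case of zero boundary data (this changes $Q$ by adding lower order terms and modifying the coefficients, but preserves the structural hypotheses of Definition~\ref{D.elliptic.quasi} on a slightly different bounded domain, with constants controlled by $|\varphi|_{C^2}$, which is why $\widehat K$ appears). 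Then the problem is local near each boundary point: cover $\partial\Omega$ by finitely many coordinate charts in which $\partial\Omega$ is flattened to a piece of hyperplane by a $C^2$ diffeomorphism, noting the equation remains quasilinear elliptic of the same type with the ellipticity and coefficient bounds transformed in a controlled way.

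\smallskip

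In the flattened half-ball picture, with $u=0$ on the flat part $\{x_n=0\}$, I would carry out the standard technique for boundary gradient H\"older estimates: first obtain a uniform bound on $|Du|$ near the boundary (this is given by hypothesis, $|u|+|Du|\le K$), then establish H\"older continuity of $Du$ up to $\{x_n=0\}$. The tangential derivatives $D_1u,\dots,D_{n-1}u$ vanish on $\{x_n=0\}$, so after an even/odd reflection across $\{x_n=0\}$ each first derivative $D_k u$, $k<n$, extends to a function satisfying (in a weak sense) a linear elliptic equation with bounded measurable coefficients obtained by freezing $Du$ (which is bounded); De Giorgi--Nash--Moser then gives interior H\"older continuity of the reflected function, hence H\"older continuity of $D_ku$ up to the boundary. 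For the normal derivative $D_nu$ one uses the equation itself: solving $Q(u)=0$ for $a^{nn}D_{nn}u$ and using ellipticity $a^{nn}\ge\lambda_{\widehat K}>0$ together with the already-established control on the other second derivatives and on $D_k u$ ($k<n$) recovers H\"older control of $D_nu$ as well. Combining the boundary estimate in each chart with the interior estimate and a standard patching argument yields \eqref{Holder global}, with $C$ and $\tau$ depending only on the listed quantities.

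\smallskip

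\textbf{Main obstacle.} The delicate point is the boundary gradient H\"older estimate itself, i.e.\ showing that the tangential and normal derivatives are uniformly H\"older up to the flat boundary portion. The tangential directions are handled cleanly by differentiating the equation, using that $D_ku$ ($k<n$) vanishes on the flat boundary, reflecting oddly, and invoking the De Giorgi--Nash--Moser theorem for the resulting divergence-form (or non-divergence form with bounded measurable coefficients, via Krylov--Safonov) linear equation; the essential input making this work is that $Du$ is \emph{a priori} bounded, which linearizes the coefficients into bounded measurable ones. The normal direction is then extracted algebraically from the equation. One must be careful that all constants produced at each step depend only on $n$, $K$, the ellipticity constant $\lambda_{\widehat K}$, the coefficient bound $\mu_{\widehat K}$, $|\varphi|_{C^2}$, and the geometry of $\Omega$ (through the finitely many $C^2$ boundary charts), which is why the statement lists exactly these dependencies; the bookkeeping of how these constants transform under the flattening diffeomorphism and the reduction to zero boundary data is the main place where care is required, but it is routine in principle. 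Alternatively, one may cite directly the version of the Ladyzhenskaya--Ural'tseva estimate in \cite{L-U} and simply indicate the modifications needed to obtain the boundary H\"older exponent for $Du$ under the present hypotheses.
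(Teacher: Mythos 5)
Your scaffolding (subtract $\varphi$ to get zero boundary data, flatten the boundary by a $C^{2}$ chart, patch interior and boundary estimates) matches the paper's setup, but the decisive boundary step you propose is a genuinely different route from the one the paper uses, and as written it contains a real gap. The paper follows Trudinger \cite[Thm.~4]{Trudinger}: after flattening it studies the quotient $v(y)=w(y)/y_{n}$, establishes a Krylov-type H\"older estimate for $v$ up to the flat boundary (Propositions \ref{P.Holder.boundary} and \ref{P.V.Holder}), performs the odd extension of $w$, combines the estimate on $v$ with the interior Ladyzhenskaya--Ural'tseva estimate to get a second-difference bound $\lvert\bar w(x+y)+\bar w(x-y)-2\bar w(x)\rvert\le C\lvert y\rvert^{1+\gamma}$, and finally invokes Stein's second-difference characterization of $C^{1,\gamma}$. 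You replace all of this by ``differentiate (or take tangential difference quotients), reflect oddly, apply De Giorgi--Nash--Moser or Krylov--Safonov, and read off $D_{n}u$ from the equation.''

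The gap is precisely there. For divergence-form quasilinear equations the odd reflection of a solution vanishing on the flat boundary again solves a divergence-form equation with bounded measurable coefficients on the doubled ball, and De Giorgi--Nash--Moser applied to each tangential difference quotient cleanly yields boundary gradient H\"older. But $Q$ here is in non-divergence form $\sum a^{ij}(x,u,Du)D_{ij}u+b$. Differentiating in a tangential direction produces an equation for $v_{k}=D_{k}u$ whose lower-order coefficients contain $D^{2}u$ through terms like $D_{p_{l}}a^{ij}\,D_{lk}u$; these are not a priori bounded, so Krylov--Safonov with bounded measurable coefficients does not directly apply. Using tangential difference quotients instead of derivatives avoids this in the interior (this is exactly how the interior estimate of \cite[Ch.~IV, Thm.~6.1]{L-U} and \cite[Thm.~13.6]{G-T} is obtained), but at the boundary the oddly reflected $w$ (or its tangential difference quotient) does not satisfy a non-divergence elliptic equation across $\{y_{n}=0\}$ unless the coefficients $a^{in}$, $i<n$, enjoy a specific reflection symmetry in $(y_{n},D_{n}u)$ that one does not have in general. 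Finally, your last step --- ``solving $Q(u)=0$ for $a^{nn}D_{nn}u$ recovers H\"older control of $D_{n}u$'' --- controls the wrong object: the equation expresses $D_{nn}u$, not $D_{n}u$, in terms of the other derivatives, and turning a bound on $D_{nn}u$ into a H\"older modulus for $D_{n}u$ up to the boundary requires an integration in $y_{n}$ which is exactly what the Krylov estimate on the quotient $w/y_{n}$ (Propositions \ref{P.Holder.boundary}, \ref{P.V.Holder}) performs. Without that quotient estimate, or a genuine substitute, the normal-derivative H\"older bound is not established, and the proof does not close.
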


\begin{remark}
We observe that the assumption that $u$ has bounded gradient in 
Theorem \ref{T.Holder-Trudinger} is crucial. Indeed, consider in $\R^3$ 
a vertical catenoid $C$. Assume that the {\em neck} of $C$ stays in the 
horizontal plane $\{ x_3=0\}$. Thus the part of $C$ staying between the 
planes  $\{ x_3=0\}$ and $\{ x_3=1\}$ is the graph  of a function $u$ defined 
on an annulus in the plane $\{ x_3=0\}$. The function $u$ satisfies the minimal 
equation on this annulus of $\R^2$ and the gradient of $u$ is not bounded near 
the inner circle of the annulus. Therefore $u$ has not extension  
$C^{1,\tau}$ up to the boundary.
\end{remark}

\ 

\begin{proof}
 
We present  Trudinger's proof, see \cite[Theorem 4]{Trudinger}, omitting the derivation of certain 
assertions, 
but giving further details for the sake of clarity.

\smallskip 
 
 We set $\wh u:= u-\varphi$. Thus 
 $\vert \wh u\vert + \vert D\wh u\vert \leq \wh K$ in $\Omega$, 
 $\wh u=0$ on $\partial\Omega$ 
 and $\wh u$  satisfies 
\begin{equation*}
 \sum_{i,j=1}^n \wh a^{ij}(x,\wh u,\wh Du) D_{ij}\wh u + \wh b(x,\wh u,D\wh u)
 =0
\end{equation*}
 with 
\begin{align*}
 \wh a^{ij}(x,\wh u,\wh Du) &:= a^{ij}(x, \wh u + \varphi, D\wh u + 
D\varphi)\\
\wh b(x,\wh u,D\wh u) &:=b (x, \wh u + \varphi, D\wh u + D\varphi) + 
 \sum_{i,j=1}^n \wh a^{ij}(x,\wh u,\wh Du)D_{ij}\varphi.
\end{align*}

 We consider the linear operator on $\Omega$
\begin{equation*}
 L(\omega)=\sum_{i,j} \alpha^{ij}(x) D_{ij} \omega
\end{equation*}
 for $\omega\in C^2(\Omega)$, where 
 $\alpha^{ij}(x):=\wh a^{ij}(x, \wh u(x), D\wh u (x))$. 
Observe that $(\alpha^{ij}(x))$ is a bounded and continuous symmetric matrix 
on $\Omega$. Furthermore we have  
$0< \lambda_{\wh K} \leq \rho (x) \leq \Lambda_{\wh K}$ for any $x\in \Omega$, 
where 
$\rho (x)$ is any eigenvalue of the matrix $(\alpha^{ij}(x))$.

Observe also that $\wh u$ satisfies $L(\wh u)=f(x)$, where 
$f(x):=-\wh b(x, \wh u(x), D\wh u (x))$.
 
 \medskip 
 
Next we state without proofs some structure facts that we use in the sequel. 

\medskip 

For any vector $d=(d_1,\dots,d_k)\in \R^k$, $k\in \N^*$, we set  
$\vert d\vert=\sqrt{d_1^2 + \cdots + d_k^2}$. 
 \medskip 
 
 Let $p\in \partial \Omega$ be any boundary point. Since $\partial\Omega$ is a 
compact embedded hypersurface of $\R^n$ with $C^2$ regularity, we deduce first 
that there exists a constant $A>0$ such that for any $p\in \partial \Omega$ and 
for any normal curvature $k_n(p)$ of $\partial\Omega$ at $p$, we have 
$\lvert k_n (p)\lvert \leq A$. 

\smallskip 

We deduce also that there exists a positive constant $R<1$, depending only on 
the 
geometry of 
 $\partial \Omega$, and not on $p$, such that 
\begin{itemize}

 \item $\Omega \cap B_R(p)$ is connected, where $B_R(p)$ is the ball centered 
at p with radius $R$.

\item We choose orthonormal coordinates $(y_1,\cdots,y_n)$ in $\R^n$ such that 
$p=0$ in those coordinates and $(\frac{\partial}{\partial y_1}$,\dots,
$\frac{\partial}{\partial y_{n-1}})$ is a basis of the tangent space of 
$\partial\Omega$ at $p$. Then a neighborhood of $p$ in $\partial\Omega$ is the 
graph of a $C^2$ function $h$ defined in a neighborhood of $0$ in 
$\{y_n =0  \}$ containing the {\em disk} $\{\lvert y\lvert < R,\ y_n=0 \}$. 
Moreover $h$ satisfies 
\begin{itemize}
 \item $h(0)=0$ and $Dh (0)=0$,
 
 \item $\lvert Dh\lvert < 1/4$ and 
 $\lvert \frac{\partial^2 h}{\partial y_i \partial y_j}\lvert \leq 8A$ on the 
whole domain where $h$ is defined.
\end{itemize}

\item The map $F : B_R(p)\longrightarrow \R^n$ defined by 
$F(y):= y -(0,\dots,0,h(y_1,\dots,y_{n-1}))$ is 
 a $C^2$ diffeomorphism onto its image, 
satisfying $F (p)=0$ and
\begin{itemize}
 \item $F(\Omega \cap B_R(p)) \subset \{y_n >0  \}$,
 
 \smallskip 
 
 \item $F(\partial\Omega \cap B_R(p)) \subset \{y_n =0 \}$,
 
 \smallskip 
  
 \item  $\lvert D F_i\lvert \leq  2$, 
 where $F=(F_1,\dots,F_n)$,
  
 \smallskip 
 
 \item $\lvert D_{qs} F_i\lvert \leq 8A$, $q,s=1,\dots,n$, 
  
 \smallskip 
 
 \item for any positive $r \leq R$, setting 
 $B_r:=\{y\in \R^n,\ \lvert y\lvert <r \}$ and 
  $B_{r}^+= B_{r} \cap \{y_n >0  \}$, 
 we have 
 $B_{r/2}^+ \subset F(\Omega \cap B_r(p))$ and 
 $B_{r/4}(p) \cap \Omega \subset F^{-1}(B_{r/2}^+)$.
\end{itemize}
\end{itemize}
Thus, the function $w:= \wh u\circ F^{-1}$ is defined on $B_{R/2}^+$ and 
satisfies the linear elliptic equation 
\begin{align}
\label{Eq.Linear} 
 \wt L(w) & := \sum_{i,j}\wt \alpha ^{ij} (y) D_{ij}w = \wt f(y) \quad 
\text{in}\quad B_{R/2}^+, \\
w&= 0 \quad \text{on} \quad \partial B_{R/2}^+ \cap \{y_n =0\},\notag
\end{align}
where
\begin{itemize}
 \item $\wt \alpha ^{ij} (y):= 
 \sum_{q,s}\alpha ^{qs} (F^{-1}(y)) \, D_s F_j\, D_q F_i$,
 
 \item  $\wt f(y):= -\sum_i\Big(\sum_{q,s} \alpha ^{qs} (F^{-1}(y))D_{qs}F_i   
\Big) D_iw + f(F^{-1}(y)) $.
\end{itemize}
Observe that we have $(\wt \alpha ^{ij})=DF\, (\alpha^{ij})\,  ^t DF$. Taking 
into account the definition of $F$, a straightforward computation shows that 
we have 
$\dfrac{\lambda_{\wh K}}{2} \leq \wt \rho (y)\leq 
4 \Lambda_{\wh K}$ for any eigenvalue $\wt \rho (y)$ of the matrix 
$( \wt \alpha ^{ij} (y))$ and that $\lvert w\lvert + \lvert 
Dw\lvert \leq (1 +2\sqrt{n}) \wh K$. 

Since $u$ has bounded gradient we observe that the function $\wt f$ is bounded 
on $B_{R/2}^+$.

\smallskip 

We consider the function $v(y):=\dfrac{w(y)}{y_n}$ on   $B_{R/2}^+$ and we set 
$\delta:=\frac{\lambda_{\wh K}}{48 n\Lambda_{\wh K}}$. 

Since $u$ has bounded gradient we deduce from the 
proof of 
\cite[Theorem 1.2.16]{Han} that there exist real numbers 
$C_1 = C_1(n, K, \lvert \varphi\lvert_{C^1(\Omega)}, \lambda_{\wh K},
\Lambda_{\wh K})>0$ and $\alpha=\alpha(n, K, \lvert 
\varphi\lvert_{C^1(\Omega)}, \lambda_{\wh K},
\Lambda_{\wh K})\in (0,1)$,  such that for any positive $r \leq 
\frac{\delta}{4}\frac{R}{2}$ and 
for any 
$x,y \in B_r^+$ we have
\begin{equation}\label{F.Krylov}
 \left\vert v(x) - v(y)  \right\vert \leq 
 \frac{C_1}{  R^\alpha}r^\alpha \big( 
\sup_{B_{R/2}^+} \vert Dw\vert + 
{  R}\sup_{B_{R/2}^+} \vert \wt f\vert   \big) .
\end{equation}
Observe that in \cite[Theorem 1.2.16]{Han}
it is assumed  that 
$w \in C^1(B_{R/2}^+ \cup \Sigma_{R/2})$ where 
$\Sigma_{R/2}:=B_{R/2} \cap \{y_n=0 \}$. 
Nevertheless, what is required in the proof is that 
$\vert Dw \vert$ is bounded 
on $B_{R/2}^+$. 

It follows from Proposition \ref{P.Holder.boundary} in the Appendix 
 that the function $v$ extends to a continuous 
function on 
$ B_{\delta R/16} ^+ \cup \Sigma_{\delta R/16}$. Moreover we obtain the 
boundary H\"older 
estimates of  Krylov: for any $x^\prime,y'\in \Sigma_{\delta R /64}$ we have 
\begin{equation}\label{v.Holder.boundary}
 \vert v( x')- v( y')\vert \leq \frac{ C_1}{R^\alpha} 
 \big( \sup_{B_{R/2}^+} \vert Dw\vert + R \sup_{B_{R/2}^+} \vert \wt f\vert   
\big) \vert  x'- y'\vert^\alpha .
\end{equation}
Thus the function $w$ admits normal derivative along $\Sigma_{\delta R/32}$. 
Using Formula (\ref{v.Holder.boundary}) it can be shown that $v$ is 
H\"older continuous on $ B_{\delta R/256} ^+ \cup \Sigma_{\delta R/256}$, 
see 
Proposition \ref{P.V.Holder} in the Appendix. 

More 
precisely there exist positive numbers 
$C_2=C_2(n,K,\lvert \varphi\lvert_{C^1(\Omega)}, 
\lambda_{\wh K},\Lambda_{\wh K},R)$ and 
$\beta=\beta(n,K, \lvert \varphi\lvert_{C^1(\Omega)}, 
\lambda_{\wh K},\Lambda_{\wh K})\leq \alpha <1$,  
such that for any 
$x,y\in B_{\delta R/256} ^+ \cup \Sigma_{\delta R/256}$ we have 
\begin{equation}\label{v.Holder}
 \vert v( x)- v( y)\vert \leq \frac{C_2}{R^\beta} 
 \big( \sup_{B_{R/2}^+} \vert Dw\vert + R \sup_{B_{R/2}^+} \vert \wt f\vert   
\big) \vert  x-  y\vert^\beta .
\end{equation}

We know from \cite[Chapter IV, Theorem 6.1]{L-U} and 
\cite[Theorem 13.6]{G-T} that we have  the Ladyzhenskaya and Ural'tseva a 
priori interior 
estimates for the function $\wh u$ on $\Omega$. Namely, there exist positive 
constants 
$C_3, \eta <1$ such that 
for any subdomain 
$\Omega^\prime \subset \ov \Omega^\prime\subset \Omega$, we have 
for any $x_1,x_2 \in \Omega^\prime$ 
\begin{equation}\label{F.Interior Estimates}
 \lvert  D\wh u(x_1) - D\wh u(x_2)\lvert \leq C_3\, \lvert x_1-x_2\lvert^\eta 
\, 
 \dist (\Omega^\prime, \partial\Omega)^{-\eta}
\end{equation}
with  $C_3=C_3(n,K,\mu_{\wh K},\lambda_{\wh K}, 
\lvert \varphi\lvert_{C^2(\Omega)}, \diam (\Omega)$ and 
$\eta=\eta(n,K,\mu_{\wh K},\lambda_{\wh K},
\lvert \varphi\lvert_{C^2(\Omega)} )$. Note that the dependence  of 
$\lvert \varphi\lvert_{C^2(\Omega)}$ arises from the definition of $\wh b$.

\medskip 

Now we extend the function $w$ by odd reflection to the whole ball 
$B_{R/2}$ setting for any $y=(y^\prime, y_n) \in B_{R/2}$
\begin{equation*}
 \ov w (y)=
\begin{cases}
 w(y) \ \ \ &\text{if} \ y_n \geq 0 \\
 -w(y^\prime, -y_n) \ \ \ &\text{if} \ y_n < 0.
\end{cases}
\end{equation*}
Observe that $\ov w$ is a continuous function and that 
$\ov w \in C^2(B_{R/2} \setminus \Sigma_{R/2})$. Consequently, $v$ extends also 
to a continuous function $\wt v$ on the whole ball 
$B_{\delta R/16}$ by setting 
$\wt v (y^\prime, y_n):=\dfrac{\ov w (y^\prime, y_n)}{y_n}$ for any 
$y\in B_{\delta R/16} \setminus\Sigma_{\delta R/16}$ and 
$\wt v (y^\prime,0):= v (y^\prime,0)$ for any 
$(y^\prime,0) \in \Sigma_{\delta R/16}$.

From (\ref{v.Holder}) we get that for any $x,y \in B_{\delta R/256} $ we 
have 
\begin{equation}\label{v.Holder-entiere}
 \vert \wt v( x)- \wt v( y)\vert \leq 2\frac{C_2}{R^\beta} 
 \big( \sup_{B_{R/2}^+} \vert Dw\vert + R \sup_{B_{R/2}^+} \vert \wt f\vert   
\big) \vert  x-  y\vert^\beta .
\end{equation}
We set $R_1:=\delta R/256$.
Using the a priori interior H\"older estimates (\ref{F.Interior Estimates})   
for 
the gradient of $\wh u$  
 and the
H\"older estimates (\ref{v.Holder-entiere}) for $\wt v$, Trudinger derived 
in the proof of \cite[Theorem 4]{Trudinger} that 
for any $x,y\in B_{R_1/4}$ we have 
\begin{equation*}
 \lvert \ov  w (x+y) + \ov  w (x-y) - 2\ov  w (x) \lvert \leq 
 C_4 \lvert y \lvert^{1+\gamma},
\end{equation*}
where $\gamma:=\beta\eta/(1+\eta)$ and 
$C_4=C_4(n,K,\mu_{\wh K}, \lambda_{\wh K},\lvert \varphi\lvert_{C^2(\Omega)}, 
\Omega)$.

\smallskip 

Now consider a $C^\infty$ function $\psi$ on the whole $\R^n$ such that 
\begin{equation*}
\begin{cases}
 \psi (y)= 1 \ \ \ \text{if} \ \ \lvert x\lvert \leq R_1/32 \\
   0 < \psi (x) <1 \ \ \ \text{if} \ \  R_1/32 < \lvert x\lvert < 3R_1/64 \\
 \psi (y)= 0 \ \ \ \text{if} \ \ \lvert x\lvert \geq 3R_1/64.
\end{cases}
 \end{equation*}
Then $\psi \ov  w$ is a continuous function defined on $\R^n$. It can be 
shown that there exists a constant 
$C_5=C_5(n,K,\mu_{\wh K}, \lambda_{\wh K},\lvert \varphi\lvert_{C^2(\Omega)}, 
\Omega)$
 such that 
for any $x,y\in \R^n$ we have 
\begin{equation*}
 \lvert \psi\ov  w (x+y) + \psi\ov  w (x-y) - 2 \psi\ov  w (x) \lvert \leq 
 C_5 \lvert y \lvert^{1+\gamma}.
\end{equation*}
We deduce from \cite[Chapter V, section 4, Propositions 8 and 9]{Stein} that 
$\psi \ov  w\in C^{1,\gamma}( \R^n)$. 
More precisely, $\psi \ov  w\in C^{1}( \R^n)$ and 
there exists a universal constant $\Upsilon >0$ such that 
for any $x,y \in \R^n$ we have
\begin{equation*}
 \lvert D \psi\ov  w (x)-D \psi\ov  w(y)\lvert \leq \Upsilon C_5 
 \lvert x-y\lvert^\gamma.
\end{equation*}
Therefore, 
$w\in C^{1,\gamma} (B_{R_1/32}^+ \cup \Sigma_{R_1/32} )$ and for 
any $x,y\in B_{R_1/32}^+ \cup \Sigma_{R_1/32}$ we have
\begin{equation*}
 \lvert D  w (x)-D   w(y)\lvert \leq \Upsilon C_5 
 \lvert x-y\lvert^\gamma.
\end{equation*}

\smallskip

Recall that $\wh u= w \circ F$ on 
$\Omega \cap  B_{R_1/64}(p)$ and 
$F (\Omega \cap  B_{R_1/64}(p)) \subset B_{R_1/32}^+$. 
Therefore for any $p\in \partial \Omega$, the restriction of 
 $\wh u$ at  $\Omega \cap B_{R_1/64}$ belongs to 
$C^{1,\gamma}(\ov \Omega \cap  B_{R_1/64}(p))$.

More precisely there exist  
positive constants 
$C_6=C_6(n, K, \lambda_{\wh K}, \mu_{\wh K},\lvert 
\varphi\lvert_{C^2(\Omega)}, \Omega) $, and 

$\gamma=\tau(n, K, \lambda_{\wh K}, \mu_{\wh K},
\lvert \varphi\lvert_{C^2(\Omega)}  ) <1$, 
but which do not 
depend on $p\in \partial\Omega$, such that for any 
$x_1,x_2 \in \ov \Omega \cap  B_{R_1/64}(p)$ we have 
\begin{equation}\label{Holder boundary}
 \left\lvert D\wh u (x_1) - D\wh u(x_2) \right\lvert \leq C_6 \, 
 \lvert x_1 - x_2\lvert^\gamma.
\end{equation}
Thus $\wh u\in C^1(\ov \Omega)$. 

Finally, since $\partial\Omega$ is compact, there exist a finite number of 
points $p_1,\dots, p_k \in \partial\Omega$ such that 
$\partial\Omega\subset \bigcup_{i=1}^k \Omega \cap  B_{R_1/128}(p_i) $. 

We set 
$\Omega_0:= \Omega\setminus \bigcup_{i=1}^k \Omega \cap  B_{R_1/128}(p_i)$, 
thus $\Omega =\Omega_0 \cup \left(\bigcup_{i=1}^k \Omega \cap  
B_{R_1/64}(p_i)\right)$.

Considering the interior estimates (\ref{F.Interior Estimates}) for 
$\Omega^\prime = \Omega_0$,  the boundary H\"older estimates 
(\ref{Holder boundary}) at each subset $ \Omega \cap  B_{R_1/64}(p_i)$, 
$i=1,\dots,k$, and a ball chain argument, we conclude that 
$D\wh u \in C^\tau (\ov\Omega)$ where $\tau := \min (\gamma,\eta)$. More 
precisely $\wh u\in C^1(\ov \Omega)$ 
and there exist positive constants 
$C_7(n, K, \lambda_{\wh K}, \mu_{\wh K},
\lvert \varphi\lvert_{C^2(\Omega)}, \Omega) $, and 
$\tau (n, K, \lambda_{\wh K}, \mu_{\wh K},\lvert \varphi\lvert_{C^2(\Omega)} ) 
<1$, such that for any 
$x_1,x_2 \in \ov \Omega $ we have 
\begin{equation*}
 \left\lvert D\wh u (x_1) - D\wh u(x_2) \right\lvert \leq C_7 \, 
 \lvert x_1 - x_2\lvert^\tau. 
\end{equation*}

Since $\varphi \in C^2(\ov \Omega)$, 
there exists a positive constant 
$C_8=C_8(\lvert \varphi\lvert_{C^2(\Omega)}, \Omega, \tau)$ such that 
\begin{equation*}
 \vert D\varphi (x_1) -  D\varphi (x_2)\vert \leq C_8 
 \vert x_1 - x_2 \vert^\tau,
\end{equation*}
for any $x_1,x_2 \in \ov \Omega$. 

Finally, since $u = \wh u + \varphi$, 
setting $C:= C_7 + C_8$,  
we 
have for any 
$x_1,x_2 \in \ov \Omega $ 
\begin{equation*}
 \left\lvert Du (x_1) - D u(x_2) \right\lvert \leq C \, 
 \lvert x_1 - x_2\lvert^\tau,
\end{equation*}
where $C=C(n, K, \lambda_{\wh K}, \mu_{\wh K},
\lvert \varphi\lvert_{C^2(\Omega)}, \Omega)$. Thus we obtain that 
$u\in C^{1,\tau}(\ov \Omega)$ as desired. 
\end{proof}

\bigskip 

We infer from the proof of Theorem \ref{T.Holder-Trudinger} the following local 
version.

\begin{theorem}\label{Holder local boundary}
Let $\Omega \subset \R^n$, $n\geq 2$, be a bounded domain with $C^{2}$ 
boundary, 
and let $Q$ be a quasilinear operator as in Definition 
\ref{D.elliptic.quasi}, with 
$a^{ij}, b \in C^1(\ov \Omega \times \R\times \R^n)$.

Let 
 $T \subset \partial \Omega$ be a nontrivial  $C^{2}$ domain of the boundary 
of $\Omega$.  Let  
$\Omega_0\subset\Omega$ be a  subdomain 
 with $C^{2}$ boundary
such that 
$\ov \Omega_0 \cap \partial \Omega \subset T$.

 Let 
 $u\in C^0(\Omega \cup T) \cap C^2(\Omega)$ be a 
function satisfying 
\begin{equation*} 
\begin{cases}
  Q(u)=0 \quad \text{on} \quad \Omega, \\
  u=\varphi \quad \text{on} \quad T,
\end{cases}
\end{equation*}
where $\varphi \in C^{2}( \Omega \cup T)$.

Assume there is $K>0$ such that $\vert u\vert + \vert Du\vert \leq K$ on 
$\Omega$.

Then, there exists  a constant $\tau \in (0,1)$, such that  
$u\in C^{1,\tau}( \ov \Omega_0 )$

More precisely, $u\in  C^1( \ov\Omega_0) $ and there exist
positive 
numbers $C=C(n, K,  \lambda_{\wh K}, \Lambda_{\wh K}, 
 \lvert\varphi \lvert_{C^2( \ov \Omega_0)}, \Omega_0)$ 
and $\tau = \tau (n, K,  \lambda_{\wh K}, \Lambda_{\wh K}, 
 \lvert\varphi \lvert_{C^2(\ov \Omega_0)})$
 such that
for any 
$x_1,x_2 \in \ov \Omega_0 $ we have 
\begin{equation*}
 \lvert Du (x_1)- Du (x_2)\lvert \leq 
 C \lvert x_1 - x_2\lvert^\tau,
\end{equation*}
where $\wh K:= K + \lvert \varphi\lvert_{C^1(\ov\Omega_0)}$. 
\end{theorem}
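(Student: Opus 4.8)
The statement is a localized version of Theorem~\ref{T.Holder-Trudinger}, so the plan is to reduce it to that global theorem by a cutting-and-extension argument. First I would produce an auxiliary bounded domain $\Omega' \subset \R^n$ with $C^2$ boundary such that $\Omega_0 \subset \Omega' \subset \Omega$ and $\partial \Omega' \cap \partial \Omega$ is a neighborhood (in $\partial\Omega$) of $\overline{\Omega_0}\cap\partial\Omega$ contained in the open portion $T$; the remaining part of $\partial\Omega'$ lies in the interior of $\Omega$ and can be taken smooth, at positive distance from $\partial\Omega\setminus T$. Such a domain exists by a routine smoothing construction (round off the corners where the interior piece meets $T$ inside the $C^2$ chart for $T$). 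On $\Omega'$ the function $u$ still satisfies $Q(u)=0$, still has $|u|+|Du|\le K$, and is of class $C^0(\overline{\Omega'})\cap C^2(\Omega')$; moreover $u|_{\partial\Omega'}$ is a $C^2$ function, equal to $\varphi$ on the part in $T$ and equal to the (smooth, interior) trace of $u$ on the part inside $\Omega$ — but there is a subtlety here, since a priori we only know $u\in C^2(\Omega)$, not up to the interior portion of $\partial\Omega'$. To handle this I would first invoke the interior estimate \eqref{F.Interior Estimates}: on any subdomain compactly contained in $\Omega$ (in particular on a slightly fattened $\Omega'$ whose closure still misses $\partial\Omega\setminus T$) the gradient $Du$ is H\"older continuous, hence $u\in C^{1,\eta}$ up to the interior portion of $\partial\Omega'$.

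\smallskip

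The mild gap is that Theorem~\ref{T.Holder-Trudinger} requires the Dirichlet datum $\varphi\in C^2(\overline{\Omega'})$, whereas the interior trace of $u$ is only known to be $C^{1,\eta}$. Two ways around this: either quote the fact that Schauder-type arguments in the proof of Theorem~\ref{T.Holder-Trudinger} only use $C^{1,1}$ (indeed $C^{1,\text{something}}$) control of the boundary data on the interior portion — since there the boundary is smooth and away from the genuinely delicate flattening step, classical interior-up-to-smooth-boundary regularity applies — or, more cleanly, apply the local boundary argument only near $T$: cover $\overline{\Omega_0}\cap\partial\Omega$ by finitely many balls $B_{R_1/128}(p_i)$, $p_i\in T$, exactly as in the last paragraph of the proof of Theorem~\ref{T.Holder-Trudinger}, run the Krylov/Trudinger flattening-and-odd-reflection machinery verbatim in each such ball (nothing in that local argument used that the boundary condition held on \emph{all} of $\partial\Omega$ — it only used $\varphi\in C^2$ near $p_i$, which holds since $p_i\in T$ and $\varphi\in C^2(\Omega\cup T)$, and that $|u|+|Du|\le K$, which holds globally on $\Omega$), obtaining \eqref{Holder boundary} with $x_1,x_2\in\overline\Omega\cap B_{R_1/64}(p_i)$, and combine with the interior estimate \eqref{F.Interior Estimates} on $\Omega_0 \setminus \bigcup_i B_{R_1/128}(p_i)$ via the same ball-chain argument.

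\smallskip

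So the key steps, in order, are: (1) reduce to a neighborhood of $\overline{\Omega_0}\cap\partial\Omega$ inside $T$ by compactness, choosing finitely many boundary balls $B_{R_1/64}(p_i)$ centered at $p_i\in T$ where $R_1$ depends only on the $C^2$ geometry of $T$; (2) in each such ball repeat, word for word, the flattening $F$, the reduction to the half-ball problem \eqref{Eq.Linear}, the Krylov boundary H\"older estimate \eqref{F.Krylov}–\eqref{v.Holder.boundary}, the odd reflection of $w$, the $C^{1,\gamma}$ conclusion for $\psi\overline w$, and pull back to get the boundary estimate \eqref{Holder boundary}; (3) on the interior piece $\Omega_0\setminus\bigcup_i B_{R_1/128}(p_i)$, which is compactly contained in $\Omega$, apply the Ladyzhenskaya–Ural'tseva interior estimate \eqref{F.Interior Estimates}; (4) patch (2) and (3) with a finite ball-chain argument to get $Du\in C^\tau(\overline{\Omega_0})$ with $\tau=\min(\gamma,\eta)$ and the asserted dependence of the constants. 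The main obstacle — really the only one requiring care — is step (1)–(2): making sure the local boundary machinery genuinely needs only $\varphi\in C^2(\Omega\cup T)$ and the global gradient bound, with no hidden use of the Dirichlet condition away from $T$; this is clear from inspecting Trudinger's argument, since every estimate there is centered at a single $p\in\partial\Omega$ and localized in $B_R(p)$, but it should be stated explicitly. The dependence of $C$ and $\tau$ on $|\varphi|_{C^2(\overline{\Omega_0})}$ rather than on a global norm follows because only the values of $\varphi$ near $T$ enter, and $\overline{\Omega_0}\cap\partial\Omega\subset T$.
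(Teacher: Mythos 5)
Your second route is exactly what the paper intends: it gives no separate proof of this theorem, merely noting that it is ``inferred from the proof of Theorem~\ref{T.Holder-Trudinger},'' and your observation that the flattening/Krylov/odd-reflection machinery at a boundary point $p$ is entirely local in a ball $B_R(p)$ — using only $\varphi\in C^2$ near $p\in T$ and the global bound $|u|+|Du|\le K$ — together with compactness of $\ov\Omega_0\cap\partial\Omega\subset T$, the interior Ladyzhenskaya--Ural'tseva estimate \eqref{F.Interior Estimates} on $\Omega_0\setminus\bigcup_i B_{R_1/128}(p_i)$ (legitimate since $\ov\Omega_0$ has positive distance from $\partial\Omega\setminus T$), and the same ball-chain patching, is precisely the intended localization. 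Your first route (via an auxiliary $C^2$ domain $\Omega'$) runs into the regularity-of-boundary-data issue you already flagged, since the trace of $u$ on the interior portion of $\partial\Omega'$ is only $C^{1,\eta}$ rather than $C^2$, so the direct route via the localized balls is the cleaner and correct one; the rest of the proposal is sound.
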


\bigskip 
 
 Elliptic regularity leads to the following.
 
\begin{theorem}\label{T.Schauder}
Let $\Omega \subset \R^n$, $n\geq 2$, be a bounded domain with $C^{k+2}$ 
boundary, $k \geq 0$, 
and let $Q$ be a quasilinear operator as above with 
$a^{ij}, b \in C^{k+1}(\ov \Omega \times \R\times \R^n)$.

 Let 
 $u\in C^0(\ov \Omega) \cap C^2(\Omega)$ be a 
function satisfying 
\begin{equation*} 
\begin{cases}
  Q(u)=0 \quad \text{on} \quad \Omega, \\
  u=\varphi \quad \text{on} \quad \partial\Omega,
\end{cases}
\end{equation*}
where $\varphi \in C^{k+2}(\ov \Omega)$.

Assume there exists a constant $K>0$ such that 
$\vert u\vert + \vert Du\vert \leq K$ on 
$\Omega$. 

Then, there exists  a constant   
  $\tau \in (0,1)$, such that  
  $u\in C^{k+1,\tau}(\ov \Omega)$, where \newline 
  $\tau = \tau (n, K,  \lambda_{\wh K}, \mu_{\wh K}, 
 \lvert\varphi \lvert_{C^2( \ov\Omega)})$ and 
 $\wh K:= K + \lvert \varphi\lvert_{C^1(\ov\Omega)}$. 
\end{theorem}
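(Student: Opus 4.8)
The plan is to bootstrap from \thmref{T.Holder-Trudinger}, which already gives $u \in C^{1,\tau}(\ov\Omega)$ for some $\tau \in (0,1)$ under the stated hypotheses (note that the $C^{k+2}$ boundary and $C^{k+1}$ coefficient hypotheses here include the $C^2$/$C^1$ case $k=0$), and then to iterate classical linear Schauder estimates up to the boundary. First I would observe that once $u \in C^{1,\tau}(\ov\Omega)$, the coefficients $\tilde a^{ij}(x) := a^{ij}(x, u(x), Du(x))$ and the term $\tilde b(x) := b(x, u(x), Du(x))$ are well-defined functions on $\ov\Omega$; since $a^{ij}, b \in C^{k+1}(\ov\Omega \times \R \times \R^n)$ with $k \geq 0$, they are at least $C^1$ in all variables, hence Lipschitz, so composing with $(x, u(x), Du(x))$, which is $C^{0,\tau}$, yields $\tilde a^{ij}, \tilde b \in C^{0,\tau}(\ov\Omega)$. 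The function $u$ then solves the \emph{linear} elliptic equation $\sum_{i,j} \tilde a^{ij} D_{ij} u = -\tilde b$ in $\Omega$ with $u = \varphi$ on $\partial\Omega$, $\varphi \in C^{k+2}(\ov\Omega) \subset C^{2,\tau}(\ov\Omega)$, the operator being uniformly elliptic with ellipticity constants controlled by $\lambda_{\wh K}, \Lambda_{\wh K}$ (via the a priori bound $|u| + |Du| \leq K$) and the $C^{0,\tau}$ norms of $\tilde a^{ij}$ controlled by $\mu_{\wh K}$ and the $C^{1,\tau}$ norm of $u$ already obtained. By the global Schauder estimate up to a $C^{2,\tau}$ boundary (e.g. \cite[Theorem 6.6]{G-T}, applicable since $\partial\Omega$ is $C^{k+2}$ with $k \geq 0$), we conclude $u \in C^{2,\tau}(\ov\Omega)$. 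This settles the case $k = 0$.

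For $k \geq 1$ I would proceed by induction on $k$. Suppose we have already shown $u \in C^{k+1,\tau}(\ov\Omega)$ for the operator with $C^{k+1}$ data; I claim the argument can be pushed one derivative further when the data are $C^{k+2}$ with $C^{k+3}$ boundary. The idea is the standard one of differentiating the equation: for a tangential direction near the boundary (and any direction in the interior), the difference quotient $D_\ell u$ satisfies a linear equation obtained by differentiating $\sum_{i,j} a^{ij}(x,u,Du) D_{ij}u + b(x,u,Du) = 0$; the resulting equation for $v = D_\ell u$ has leading coefficients $a^{ij}(x,u,Du)$ — which are now $C^{k,\tau}$ by the inductive regularity of $u$ composed with the $C^{k+1}$ coefficients — and a right-hand side expressible in terms of lower-order derivatives of $u$ that are controlled in $C^{k-1,\tau}$, plus terms involving $D_\ell$ applied to the coefficient functions, which are $C^{k,\tau}$. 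The boundary value of $v$ in tangential directions is a tangential derivative of $\varphi \in C^{k+2}(\ov\Omega)$, hence $C^{k+1,\tau}$. Applying the higher-order Schauder estimate to $v$ gives $v \in C^{k,\tau}$ near the boundary (after flattening the boundary with a $C^{k+2}$ diffeomorphism as is done for $F$ in the proof of \thmref{T.Holder-Trudinger}, which transforms the equation into one on a half-ball with coefficients of the same regularity), hence all tangential derivatives of $u$ up to order $k+1$ are $C^{0,\tau}$; the normal derivatives of the missing order are then recovered algebraically by solving the equation $\sum a^{ij} D_{ij} u = -b$ for $D_{nn} u$ in terms of the already-controlled quantities, using uniform ellipticity $a^{nn} \geq \lambda_{\wh K} > 0$ to invert. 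Combining with the interior estimates \cite[Theorem 6.2]{G-T} and a finite cover of $\partial\Omega$ exactly as in the final paragraph of the proof of \thmref{T.Holder-Trudinger}, one obtains $u \in C^{k+1,\tau}(\ov\Omega)$, completing the induction.

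The main obstacle, and the place requiring the most care, is the bookkeeping in the differentiated equation near the boundary: one must check that after flattening the boundary via the $C^{k+2}$ diffeomorphism $F$, the transformed equation for the tangential difference quotient of $w = u \circ F^{-1}$ genuinely has coefficients in $C^{k,\tau}$ of the half-ball and a right-hand side in $C^{k-1,\tau}$ with norms controlled only by the allowed constants $(n, K, \lambda_{\wh K}, \mu_{\wh K}, |\varphi|_{C^2(\ov\Omega)})$ and the inductively-obtained norms of $u$ — in particular that no spurious dependence on higher-order quantities enters. This is the standard but somewhat tedious verification underlying the classical statement that a $C^2$ solution of a quasilinear elliptic equation with $C^{k+1}$ coefficients, $C^{k+2}$ boundary data, and a bounded gradient is $C^{k+1,\tau}$ up to a $C^{k+2}$ boundary; I would cite \cite[Theorem 6.6, Lemma 6.5]{G-T} for the linear Schauder input at each stage and carry out the quasilinear bootstrap explicitly for $k=0$ and indicate the inductive step, since the extra terms at stage $k$ are schematically the same as at stage $0$ modulo lower-order data that are already controlled.
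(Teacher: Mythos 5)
Your overall strategy matches the paper's: anchor the base case $k=0$ with \thmref{T.Holder-Trudinger} and then bootstrap via linear Schauder theory. There are, however, two points worth flagging.

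First, a genuine slip in the base step. You write that once $u\in C^{1,\tau}(\ov\Omega)$, the linear Schauder estimate applies ``since $\partial\Omega$ is $C^{k+2}$ with $k\geq 0$'' and $\varphi\in C^{k+2}(\ov\Omega)\subset C^{2,\tau}(\ov\Omega)$, concluding $u\in C^{2,\tau}(\ov\Omega)$ and declaring the case $k=0$ settled. But for $k=0$ the hypotheses give only a $C^2$ boundary and $\varphi\in C^2(\ov\Omega)$, and $C^2\not\subset C^{2,\tau}$. The Schauder regularity theorem you need (\cite[Theorem 6.19]{G-T}, rather than the a priori estimate of Theorem 6.6) requires $C^{2,\tau}$ boundary and boundary data, so this first bootstrap step is unavailable for $k=0$. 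The slip is harmless only because the conclusion claimed by the theorem when $k=0$ is merely $u\in C^{1,\tau}(\ov\Omega)$, which is precisely \thmref{T.Holder-Trudinger} and needs no further argument; the inclusions $C^{k+2}\subset C^{2,\tau}$ etc.\ first become available at $k\geq 1$, and this is where the bootstrap starts. You should reorganize the indexing accordingly.

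Second, a comparison of routes. For the inductive step you propose to differentiate the equation in tangential directions, control the coefficients and right-hand side of the resulting linear equation for $D_\ell u$, recover the normal second derivative algebraically from ellipticity, and patch via a boundary cover---this is essentially a hand-built version of the proof of the higher-order linear regularity theorem. The paper takes the shorter path: having $u\in C^{j,\tau}(\ov\Omega)$ from the previous stage, the frozen coefficients $a^{ij}(x,u(x),Du(x))$ and $b(x,u(x),Du(x))$ lie in $C^{j-1,\tau}(\ov\Omega)$, and a single invocation of \cite[Theorem 6.19]{G-T} for the linearized equation $\sum \wt a^{ij}D_{ij}u = -\wt b$ (with $\partial\Omega$ and $\varphi$ of class $C^{j+1,\tau}$, both available once $k\geq j$) immediately gives $u\in C^{j+1,\tau}(\ov\Omega)$; iterating $k$ times lands exactly on $C^{k+1,\tau}(\ov\Omega)$. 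Your difference-quotient argument is correct in outline and yields the same conclusion, but it duplicates work already packaged in \cite[Theorem 6.19]{G-T}, and it is where the ``tedious bookkeeping'' you yourself flag lives; the direct freezing-plus-citation route avoids it entirely.
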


\begin{proof}
 The proof proceeds by induction on $k\geq 0$.

 For $k=0$ this is   Theorem \ref{T.Holder-Trudinger}. 
 
 The rest of the proof is a straightforward  consequence of 
 Schauder theory, see  \cite[Theorem 6.19]{G-T}.
\end{proof}

\begin{remark}\label{R.local}
 There is a local version of Theorem \ref{T.Schauder}. Namely 
let 
 $T \subset \partial \Omega$ be a nontrivial   domain of the boundary 
of $\Omega$. Let us assume that  $\varphi \in C^{k+2}(\Omega \cup T)$.

 Let $\Omega_0 \subset \Omega$ be a  subdomain with $C^{k+2}$ 
boundary such that 
$\ov \Omega_0 \cap \partial\Omega \subset T$. Let 
$u\in C^2(\Omega) \cap C^0(\Omega \cup T)$ satisfying $Q(u)=0$ on $\Omega$ and 
$u=\varphi$ on $T$.

  Then we have 
$u\in C^{k+1,\beta}(\ov \Omega_0 )$, where 
$\beta=\beta (n, K,  \lambda_{\wh K}, \mu_{\wh K}, 
 \lvert\varphi \lvert_{C^2( \ov\Omega_0)})$ and \linebreak
 $\wh K:= K + \lvert \varphi\lvert_{C^1(\ov\Omega_0)}$. 
\end{remark}

\section{Proof of the Main Theorem}

\subsection{Minimal equation}
$  $

We first give the minimal equation for a graph $x_3=u(x_1,x_2)$ 
in some arbitrary  local coordinates 
$(x_1,x_2,x_3)$ of $M$, following 
Colding-Minicozzi
\cite[Equation (7.21)]{C-M} and Gulliver \cite[Section 8]{Gulliver}.

Let $u$ be a $C^\infty$ function defined on a domain  $\Omega$  
contained in the 
$x_1,x_2$ plane of coordinates. Let $S\subset M$ be the graph of $u$.

We use the usual convention for the partial derivative of a $C^2$  
function $u$:
$u_i=\frac{\partial u}{\partial x_i}$, 
$u_{ij}=\frac{\partial^2 u}{\partial x_i \partial x_j}$, $i,j =1,2$.
We denote $g_{ij}\in C^\infty$, $1\leq i,j \leq 3$, the coefficients of the 
Riemannian metric $g$ in the local coordinates $(x_1,x_2,x_3)$ and we call $G$ 
the $3\times 3$ matrix $(g_{ij})$. Up to restricting the local 
coordinates, we can assume that the matrix $G$ is bounded.

Let 
$ \Gamma_{ij}^m\in C^\infty$ be the Christoffel symbols 
of the Riemannian metric $g$, $1\leq i,j,m \leq 3$. We set 
$\partial_i:=\frac{\partial}{\partial x_i}$, $i=1,2,3$. 
Then, $X_i:=\partial_i + u_i \partial_3$, $i=1,2$, is the adapted frame 
field 
generating the tangent plane of $S$.

Let $h_{ij}$ be the coefficients of the metric induced on $S$, that is 
$h_{ij}=g(X_i \,; X_j)$, \linebreak $1\leq i,j\leq 2$.

Let $N:=\sum_i N_i \partial_i$ be the unit normal field on $S$ with 
$N_3 > 0$. We set $W:=1/g(N ; \partial_3)$. We have 
\begin{equation*}
 g(N;\partial_i)= -\frac{u_i}{W},\ i=1,2.
\end{equation*}
Note that  the 
coordinates  of $N$ are given by 
\begin{equation*}
 \begin{pmatrix}
  N_1 \\ N_2 \\ N_3
 \end{pmatrix}=
\frac{1}{W}G^{-1}  
 \begin{pmatrix}
  -u_1 \\ -u_2 \\ 1
 \end{pmatrix}.
\end{equation*}
Since $g(N,N)=1$ we obtain 
\begin{equation*}
 W^2= g^{33} -2\sum_{i=1}^2 u_i g^{i3}  + \sum_{i,j=1}^2 u_i u_j g^{ij},
\end{equation*}
where $G^{-1}= (g^{ij})$ is the inverse matrix of $(g_{ij})$.

The mean curvature $H$ of $S$ is given by 
\begin{equation*}
2H= \sum_{i,j=1}^2 h^{ij} g( N ; \nabla_{X_i}X_j),
\end{equation*}
where $ (h^{ij})$ is the inverse matrix of $(h_{ij})$ and 
$\nabla$ is the covariant derivative on $(M,g)$.

We define 

\begin{align*}
F(x,u,u_1,u_2,u_{11}, u_{12}, u_{22})&:=\sum_{i,j=1}^2 \Big[
h^{ij}\left( u_{ij} + \Gamma_{ij}^3 + u_i\Gamma_{3j}^3 + 
u_j\Gamma_{3i}^3 + u_i u_j\Gamma_{33}^3 \right) \\
&\ \ \ \ \ \ \ \ \ \qquad
-\sum_{m=1}^2u_m h^{ij}\left( \Gamma_{ij}^m + u_i\Gamma_{3j}^m + 
u_j\Gamma_{i3}^m +  u_i u_j\Gamma_{33}^m \right)\Big].
\end{align*}


Then by a computation it follows that the minimal equation  ($H=0$) reads as 
\begin{equation}\label{Eq.Minimal.equation}
 F(x,u,u_1,u_2,u_{11}, u_{12}, u_{22})=0.
\end{equation}

 Since $h_{ij}=g(X_i\, ;\, X_j)$, $(h_{ij})$ is a symmetric and 
positive matrix. This implies that $(h^{ij})$ is also a symmetric and 
positive matrix and, therefore, the 
  equation (\ref{Eq.Minimal.equation}) is an elliptic PDE. 
Furthermore, if $u$ has bounded gradient then the equation 
(\ref{Eq.Minimal.equation}) is uniformly elliptic.  This means that 
there exist two positive constants $\lambda \leq \Lambda$ such that for any 
$x\in \Omega$ and for any eigenvalue $\rho(x)$ of the matrix 
$(h^{ij}(x))$, we have $0 < \lambda \leq \rho (x) \leq \Lambda$.

\begin{remark}\label{R.Analytic}
Let $M$ be an analytic three manifold, and let $S \subset M$ be a  
minimal surface with an analytic open arc $\gamma$ on its boundary. We assume 
that 
$S\cup \gamma$ is a $C^1$ surface with boundary. 

Then for any $p\in \gamma$ there exists a neighborhood $U\subset M$ of $p$ 
in $M$ such that $U\cap S$ is a  graph $x_3= u (x_1,x_2)$ in some local 
coordinates $(x_1,x_2,x_3)$ at $p$, of an analytic function $u$ defined on a 
domain 
$\Omega \subset \{x_3=0  \}$, containing an analytic arc $\gamma_0$ on its 
boundary. Furthermore $\gamma_0$ is the projection of $\gamma$ and 
$u\in C^1(\Omega\cup \gamma_0)$. 

We infer from Theorem \ref{T.Schauder} that 
$u\in C^k(\Omega\cup \gamma_0)$ for any $k\in \N$, and then  
$u\in C^{2,\mu}(\Omega\cup \gamma_0)$ for any $\mu \in (0,1)$. 
We conclude from \cite[Theorem 5.8.6']{Morrey} that $u$ is analytic on 
$\gamma_0$ and extends analytically across $\gamma_0$. Therefore, $S$ can be 
extended analytically across $\gamma$ as a minimal surface.
\end{remark}

\bigskip

\subsection{Proof of Theorem \ref{T.Main.Theorem}}
$  $

Let $p$ be any point on the geodesic arc $\gamma$.  
 We are going to construct convenient local coordinates $(x_1,x_2,x_3)$ of $M$ 
near  $p\in \gamma$.
 
 First we choose a parametrization by arc lenght, 
 $x_1 \in (-\varepsilon, \varepsilon)$, of a open 
subarc of $\gamma$: 
 $\wh \gamma : (-\varepsilon, \varepsilon) \longrightarrow \gamma$,  such that 
$\wh \gamma (0)=p$. 

\smallskip 

Recall that by assumption, $S\cup \gamma$ is an embedded $C^1$ surface with 
boundary. 

Let 
$\nu_p $ be the unit inner tangent vector of $ S\cup \gamma$ at $p$, 
orthogonal to $\gamma$. We denote by $\nu$ the parallel vector field along 
$\gamma$ such that $\nu (p)=\nu_p$.
 By abuse of notation we denote  
 $\nu (x_1)= \nu (\wh \gamma (x_1))$. 
 
 We set 
\begin{equation*}
 \Sigma := \big\{F(x_1,x_2):=\exp_{ \wh \gamma (x_1)} x_2 \nu (x_1),\ x_1,x_2 
\in  (-\varepsilon, \varepsilon)   \big\}.
\end{equation*}
Clearly, if $\varepsilon >0$ is small enough,  then $\Sigma\subset M$ is a 
 properly  embedded $C^\infty$-surface. Furthermore, $\Sigma$ and 
 $S\cup \gamma$ share the same tangent plane at $p$.

 \smallskip

 Let $\eta$ be a $C^\infty$ unit normal vector field along $\Sigma$. Thus if 
 $\varepsilon >0$ is small enough the map 
\begin{align*}
(-\varepsilon, \varepsilon) &\times (-\varepsilon, \varepsilon) \times 
(-\varepsilon, \varepsilon) \longrightarrow M \\
 G(x_1,x_2,x_3)&:= \exp_{F(x_1,x_2)} x_3 \eta,
\end{align*}
is a $C^\infty$ proper embedding. Therefore $G$ provides local coordinates of 
$M$ near $p$, and we have  $G(0,0,0)=p$. We set 
\begin{equation*}
 U_\varepsilon := \big\{(x_1,x_2,x_3),\  
x_1,x_2 , x_3 \in  (-\varepsilon, \varepsilon)  \big\} \subset \R^3 .
\end{equation*}
We define
\begin{equation*}
 V_\varepsilon := G(U_\varepsilon),
\end{equation*}
thus $V_\varepsilon$ is an open neighborhood of $p$ in $M$.

Observe that by construction we have 
\begin{equation*}
 I_\gamma \big( G(x_1,x_2,0) \big)= G(x_1,-x_2,0) \quad \text{and} \quad 
 I_\gamma \big( G(x_1,0,x_3) \big)= G(x_1,0,-x_3) 
\end{equation*}
for any $x_1,x_2,x_3 \in (-\varepsilon, \varepsilon)$. Therefore 
by a continuity argument we get that 
\begin{equation*}
 I_\gamma \big( G(x_1,x_2,x_3) \big)= G(x_1,-x_2, -x_3)
\end{equation*}
for any $x_1,x_2,x_3 \in (-\varepsilon, \varepsilon)$.
 
\medskip 

Now we establish  that  $S$ can be locally extended near $p$ by reflection 
across $\gamma$ as a minimal surface.

By abuse of notations we identify $V_\varepsilon$ with $U_\varepsilon$ 
 and a point $G(x_1,x_2,x_3)$ with 
 $(x_1,x_2,x_3)$. Therefore $\Sigma$ is identified with 
 $(-\varepsilon, \varepsilon)\times (-\varepsilon, \varepsilon)\times \{ 0\}$. 
Observe that the reflection $I_\gamma$ reads as 
\begin{equation*}
 I_\gamma(x_1,x_2,x_3)=(x_1,-x_2,-x_3),
\end{equation*}
 for any $(x_1,x_2,x_3)\in U_\varepsilon$.

Note that  by construction we have $T_p \Sigma = T_p (S\cup \gamma)$. Therefore 
 if 
$\varepsilon >0$ is small enough then 
$G^{-1}\big((S\cup \gamma) \cap V_\varepsilon \big)$ is the graph 
$x_3 =u(x_1,x_2)$ 
of a 
$C^1$ function $u$ defined on  
$\Sigma^+ :=\{(x_1,x_2),\ x_1\in  (-\varepsilon, \varepsilon), x_2 \in 
 [0, \varepsilon) \}$, using the identification 
 $V_\varepsilon \equiv U_\varepsilon $.
 
 Observe that by assumption $u$ is $C^2$ on 
$ (-\varepsilon, \varepsilon) \times (0, \varepsilon)$ and satisfies the 
minimal equation (\ref{Eq.Minimal.equation}).

Since by assumption $u$ is $C^1$ on $\Sigma^+ $, we get that $u$  has bounded 
gradient. 
Hence from Remark \ref{R.local} we obtain  
 that $u$ is $C^k$ on $\Sigma^+ $ for any $k\in \N$.
 
\smallskip 

We define a function $v$ on 
$\Sigma^- :=\{(x_1,x_2),\ x_1\in  (-\varepsilon, \varepsilon), x_2 \in 
 (- \varepsilon, 0] \}$ setting 
\begin{equation*}
 v(x_1,x_2):= -u(x_1,-x_2).
\end{equation*}
By construction $u(x_1,0)=v(x_1,0) =0$ for any $x_1$ and 
$u_i (0,0)=0=v_i(0,0)$, $i=1,2$.

Then we define a function $w$ on 
$\Sigma$, identified with 
$\{(x_1,x_2),\ x_1, x_2\in  (-\varepsilon, \varepsilon)\}$, setting 
\begin{equation*}
 w(x_1,x_2):=
\begin{cases}
 u(x_1,x_2)\  \text{if} \ (x_1,x_2) \in \Sigma^+ \\
 v(x_1,x_2)\  \text{if} \ (x_1,x_2) \in \Sigma^-
 \end{cases}
 \end{equation*}
We have that $v$ is $C^2$ on $\Sigma^- $ and 
 $w$ is $C^1$ on $\Sigma$.
 
In order to check that $w$ is $C^2$ on $\Sigma$ it is enough to prove that the 
partial derivatives up to the second order of $u$ and $v$ agree along the arc 
$\Sigma^+ \cap \Sigma^- =\{(x_1,0),\ x_1\in  (-\varepsilon, \varepsilon)\}$.

For any $(x_1,x_2) \in \Sigma^-$ we have
\begin{equation*}
 v_1 (x_1,x_2) = - u_1 (x_1, -x_2) ,\ \ 
 v_2(x_1,x_2) =  u_2 (x_1, -x_2)
\end{equation*}
and
\begin{equation*}
 v_{11}(x_1,x_2)= -u_{11}(x_1,-x_2),\ \ 
  v_{12}(x_1,x_2)=u_{12}(x_1,-x_2),\ \ 
   v_{22}(x_1,x_2)= -u_{22}(x_1,-x_2).
\end{equation*}
Note also that $u_{ij}(0,0)=0$, $i,j=1,2$.

Therefore we deduce from the previous identities that 
along the geodesic line $\{x_2 =0\}$ we have 
\begin{equation*}
 F(x,u,u_1,u_2,u_{11}, u_{12}, u_{22})-F(x,v,v_1,v_2,v_{11}, v_{12}, v_{22}) =
 h^{22}(u_{22}(x_1,0) - v_{22}(x_1,0)).
\end{equation*}
Since $u$ and $v$ satisfy the minimal equation (\ref{Eq.Minimal.equation}) we 
infer 
\begin{equation*}
u_{22}(x_1,0) = v_{22}(x_1,0)=0
\end{equation*}
for any $x_1\in (-\varepsilon,\varepsilon)$. Therefore 
$u_{ij}=v_{ij}$ along the arc $\Sigma^+ \cap \Sigma^-$.

Thus we deduce that the 
function $w$ is $C^2$ on the whole domain $\Sigma$, satisfying the 
minimal equation (\ref{Eq.Minimal.equation}).  Thereby, the graph of $w$, 
denoted 
by $\wt S$, is a minimal surface. 
Of course, observe that, by 
construction,  $\wt S$ is invariant by reflection 
across $\gamma$.

\smallskip

We obtain therefore a  minimal 
continuation, $\wt S$,  of $S$ across $\gamma$, embedded near $\gamma$.  
This accomplishes the 
proof of the theorem. \qed

\bigskip 

\noindent {\em Proof of the Main Theorem bis.}
By assumption, $f$ is defined on a domain $ \Omega$ in the 
coordinates plane $\{ x_3=0\}$. Moreover the boundary $\partial\Omega$ 
contains a $C^\infty$ open arc $\gamma_0$ such that $\gamma$ is the graph of 
$f$ over $\gamma_0$. 

Since, by assumption, $f$ has bounded gradient,  the proof follows readily 
using the regularity Theorem 
\ref{T.Holder-Trudinger} as in the proof of the Main Theorem 
\ref{T.Main.Theorem}.  \qed

\bigskip 
 
At last, we discuss several general remarks about the geometry of minimal 
surfaces. 

\bigskip 

\begin{remark}
 We use the notations of the Main Theorem \ref{T.Main.Theorem bis} bis.
 
 Assume that the coordinates system $(x_1,x_2,x_3)$ containing the minimal 
surface $S$ has 
the following property: For certain (small enough) domain $\Omega$ contained in 
the 
coordinates plane $\{ x_3=0\}$, we can uniquely solve the Dirichlet Problem for 
the 
minimal equation given any (small enough) continuous data on $\partial\Omega$.

Then we can drop the assumption that $f$ has bounded gradient in 
the Main Theorem bis \ref{T.Main.Theorem bis}. For example this property occurs 
in $\R^3$, $\hd\times \R$ (see \cite[Lemma 3.6]{SE-T6}), and 
$Nil_3$ (see \cite[Theorem 4.1 and Corollary 4.3]{N-E-T}). 
\end{remark}

\begin{remark}
 Let us consider the particular case where the ambient space $M$ is analytic. 
 Let $S\subset M$ be an embedded minimal surface such that 
$S\cup \gamma$ is a $C^1$ surface with 
boundary, where $\gamma$ is an open geodesic arc of $M$ which admits a 
reflection.

Since $\gamma$ is an analytic arc, $S\cup \gamma$ is analytic and can be 
extended as an analytic surface $\wh S$  
across $\gamma$, see Remark \ref{R.Analytic}

Now we can apply either Theorem 1 in \cite{Le} or Theorem 
\ref{T.Main.Theorem}
to infer that in a neighborhood 
of any point of $\gamma$, the extended surface $\wh S$ is invariant by 
reflection across $\gamma$.
\end{remark}
 
The following result have been applied in homogeneous three spaces by many authors 
to construct complete minimal surfaces, see for example \cite{Ros}, \cite{A-R}, \cite{Men}, 
\cite{Ma-Rod-Ros}.

\begin{proposition}\label{P.Plateau}
  Let $(M,g)$ be a $C^\infty$ Riemannian three manifold and let $\Gamma$ be a Jordan 
curve. Assume that $\Gamma$ contains an open geodesic arc
$\gamma$ which admits a reflection. Let $S$ be an area minimizing solution of 
the Plateau problem, if any.

 Then, as in Theorem \ref{T.Main.Theorem}, the reflection of $S$ about $\gamma$ gives rise to a $C^\infty$ 
continuation of 
$S$ across $\gamma$. That is, $S\cup \gamma \cup I_\gamma(S)$ is a smooth  
immersed minimal surface 
which is embedded near $\gamma$. 
\end{proposition}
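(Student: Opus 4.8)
The plan is to reduce Proposition~\ref{P.Plateau} to the Main Theorem~\ref{T.Main.Theorem} by checking that an area minimizing solution $S$ of the Plateau problem for $\Gamma$ is, in a neighborhood of the geodesic arc $\gamma\subset\partial S$, an embedded minimal surface with $S\cup\gamma$ of class $C^1$. Once that regularity is in hand, Theorem~\ref{T.Main.Theorem} applies verbatim, giving the $C^\infty$ extension by reflection and embeddedness near $\gamma$. So the real content is boundary regularity for area minimizers along a smooth free boundary arc, which is classical but must be quoted correctly.

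First I would recall what ``area minimizing solution of the Plateau problem'' means here: a map from the disk (or a surface with boundary) minimizing area among all competitors with boundary $\Gamma$, realized after conformal reparametrization as a conformal harmonic (hence branched minimal) immersion, continuous up to the boundary. Interior regularity and the absence of interior branch points for area minimizers is standard (Osserman, Gulliver, Alt); the key classical input I would cite is the boundary regularity theory: if $\Gamma$ is a Jordan curve of class $C^{k,\alpha}$ (here $\gamma$ is even analytic, being a geodesic arc in a $C^\infty$ metric, and the rest of $\Gamma$ we only need smooth near $\gamma$), then the minimal immersion is $C^{k,\alpha}$ up to the corresponding boundary arc and has no boundary branch points there — this is the Hildebrandt--Nitsche--Heinz--Tomi circle of results (see \cite{D-H-S}). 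In particular $S\cup\gamma$ is an embedded $C^1$ (indeed $C^\infty$) surface with boundary in a neighborhood of each point of $\gamma$. Embeddedness of $S$ itself near $\gamma$ follows because a $C^1$ immersed surface with boundary is locally embedded, and $S\subset U_\gamma$ by hypothesis (if $S$ is not a priori contained in $U_\gamma$, one restricts to the portion near $\gamma$, which lies in $U_\gamma$ for small enough neighborhoods).

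Then the proof is: fix $p\in\gamma$; by the boundary regularity results quoted above there is a neighborhood $U$ of $p$ in $M$ such that $S\cap U$ together with $\gamma\cap U$ is an embedded $C^1$ surface with boundary contained in $U_\gamma$; apply Theorem~\ref{T.Main.Theorem} to this local piece to conclude that $(S\cap U)\cup(\gamma\cap U)\cup I_\gamma(S\cap U)$ is a smooth immersed minimal surface, embedded near $\gamma$; since $p\in\gamma$ was arbitrary and these local extensions agree on overlaps (each is the unique $C^\infty$ minimal continuation across $\gamma$, which is determined by $S$ near $\gamma$), they patch together to give that $S\cup\gamma\cup I_\gamma(S)$ is a smooth immersed minimal surface, embedded near $\gamma$.

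The main obstacle is the boundary regularity step: one must be sure that the cited theorems give $C^1$-up-to-$\gamma$ and rule out boundary branch points on the \emph{geodesic} arc $\gamma$, not merely on a full analytic Jordan curve. This is fine because boundary regularity and the nonexistence of boundary branch points are local statements along the boundary requiring only that $\Gamma$ be smooth (here analytic) on the relevant arc; the portion of $\Gamma\setminus\gamma$ may be merely rectifiable without affecting the conclusion near $\gamma$. A secondary, purely bookkeeping point is the patching of the local reflected surfaces into a single immersed surface, which is immediate once one notes that near $\gamma$ the extension produced by Theorem~\ref{T.Main.Theorem} is intrinsic to $S$ and $I_\gamma$. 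I would therefore present the proof as: invoke classical boundary regularity for area minimizers to verify the hypotheses of Theorem~\ref{T.Main.Theorem} locally along $\gamma$, apply that theorem, and patch.
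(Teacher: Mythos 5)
Your overall plan—verify the hypotheses of Theorem~\ref{T.Main.Theorem} locally along $\gamma$ and then invoke that theorem—matches the paper's strategy, and your use of boundary regularity from the Dierkes--Hildebrandt--Tromba circle to get $X\in C^2$ up to $\gamma_0$ is exactly what the paper does. However, there is a genuine gap at the point you treat as a routine citation: the exclusion of boundary branch points on $\gamma$.

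You assert that the classical boundary regularity theory ``rules out boundary branch points'' for area minimizers along a smooth boundary arc. That is not what those results give. For area minimizers, Heinz--Hildebrandt (cited in the paper as \cite{He-Hi}) yields only that boundary branch points on $\gamma_0$ are \emph{isolated} and that the surface has a well-defined tangent plane at such points; it does not exclude them. The exclusion of boundary branch points for area minimizers is classically established by Gulliver--Lesley \cite{Gul-Les} only under the hypothesis that the boundary arc (and ambient structure) is \emph{analytic}, because their argument requires extending the conformal harmonic map across the boundary and then ruling out an \emph{interior} branch point of the extension. In the present setting the ambient manifold is merely $C^\infty$, so analytic continuation across $\gamma$ is unavailable, and the exclusion of boundary branch points is precisely the nontrivial content of the proof. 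The paper handles this by hand: it constructs an explicit $C^2$ conformal harmonic extension $Z$ of $X$ across $\delta_0$ using the isometric reflection $I_\gamma$ (checking, via the special Fermi-type coordinates, that the first and second partial derivatives of $X$ and of the reflected map match along $\delta_0$, which in turn uses $\Gamma^2_{kk}(x_1,0,0)=0$ and $\partial_{x_3}g_{22}(x_1,0,0)=0$ on $\gamma$), and only then applies Gulliver--Lesley to the extended map $Z$ to conclude $0$ is not a branch point. If you omit this step, you cannot conclude that $S\cup\gamma$ is a $C^1$ \emph{embedded} surface with boundary near a potential branch point, so Theorem~\ref{T.Main.Theorem} cannot be applied. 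You should either reproduce the reflection-extension argument or find and quote a result that excludes boundary branch points of area minimizers along a $C^\infty$ (non-analytic) geodesic boundary arc; no such result is cited in the paper, and I do not believe one exists in the generality you assume.
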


\begin{proof}

We set 
\begin{equation*}
 B:= \{ (x,y)\in \R^2,\ x^2 + y^2 < 1\} \qquad \text{and} \quad 
 \ov B :=\{ (x,y)\in \R^2,\ x^2 + y^2 \leq 1\}. 
\end{equation*}

By assumption, there exists a  map $X\in C^0(\ov B, M) \cap C^2(B,M)$ such that 
\begin{itemize}
 \item $X(\ov B)=S$, and $X$ maps monotonically $\partial  B$ onto $\Gamma$.  

 \item  $g(\partial_x X ; \partial_x X) -g(\partial_yX ; \partial_y X) = g(\partial_xX ; \partial_y X)=0$, 

\item  
$ \partial_{xx} x_j +  \partial_{yy} x_j + \sum_{k,l=1}^3 \Gamma_{k\, l}^j 
 \big( \partial_x x_k \partial_x x_l + \partial_y x_k \partial_y x_l \big)=0$, 
 $j=1,2,3$, 
 
\item $X$ minimize the functional area  among all maps verifying the above properties, 
\end{itemize}
where $\Gamma_{k\, l}^j$  stands for the  Christoffel symbols, 
$\partial_x =\frac{\partial}{\partial x}$ and so on.

Let $\gamma_0\subset \partial B$ be an open arc 
such that $X(\gamma_0) =\gamma$.

Since $\gamma$ is of class $C^3$, at least, 
we get from \cite[Theorem 4-(ii), Section 2.3]{D-H-T} that 
$X\in C^2(B\cup \gamma_0, M)$. Since $X$ is area minimizing we know from 
\cite[Theorem 8.1]{Gulliver} that $X$ has no {\em true branch point} on $B$. We get from 
 \cite[Conclusion (ii) of Theorem]{He-Hi} that $X$ 
has, possibly, isolated 
boundary branch points on $\gamma_0$ and has no branch point (neither true nor false) in a neighborhood of $\gamma_0$ in $B$. 
We are going to prove that $X$ has no boundary 
branch point on $\gamma_0$.

\smallskip 
 Let $p\in \gamma$ and $p_0 \in \gamma_0$ be such that 
 $p=X(p_0)$. Up to a conformal transformation we can assume that 
$D^+:=\{(u,v)\in \R^2,\ u^2 + v^2 <1,\, \text{and } v \geq 0\}$ is a neighborhood of  
$p_0$ in $B\cup \gamma _0$,  with $p_0$ corresponding to $0$ and 
$\gamma_0$ to $\delta_0:=\{(u,0), \lvert u \lvert <1\}\subset \partial D^+$. 
\smallskip

 By an abuse of notation we assume also that 
$X$ is defined on $D^+$. Furthermore, up to reducing the neighborhood of $p_0$, we can assume that $X$ has no 
boundary branch point on $\delta_0$ except, possibly, at $0$ and has no branch point on 
$D^+ \setminus \delta_0$.  Thus $X: D^+\setminus \delta_0 \longrightarrow M$ is a $C^2$ minimal immersion, 
$X(\delta_0) = \gamma$, $X(0)=p$ and $X$ is of class $C^2$ up to $\delta_0$.

\smallskip

We deduce from  \cite[Conclusion (ii) of Theorem]{He-Hi} that the surface $S$ has a well-defined tangent plane 
at any point of $\gamma$, even at boundary branch point. 

Let $\nu_p$ be unit tangent vector of $S$ at $p\in \gamma$. We consider the coordinates 
$(x_1,x_2,x_3)\in (-\varepsilon, \varepsilon)^3$ on a neighborhood of $p$ in $M$ constructed in the 
proof of Theorem \ref{T.Main.Theorem}. Thus $p$ corresponds  to $(0,0,0)$, $\gamma$ corresponds to $x_2=x_3=0$, 
and the reflection about $\gamma$ (which is assumed to exist) 
reads as $I_\gamma(x_1,x_2,x_3)= (x_1,-x_2, -x_3)$. 

We set 
\begin{equation*}
 D^-:=\{(u,v)\in \R^2,\ u^2 + v^2 <1,\,  v\leq 0\} \ \text{and} \  
D= D^+ \cup D^- =  \{(u,v)\in \R^2,\ u^2 + v^2 <1\}.
\end{equation*}
We are going to show that $X$ can be extended to a $C^2$ map $Z: D\longrightarrow M$ verifying
\begin{itemize}
 \item $Z=X$ on $D^+$,
 \item $g(\partial_u Z; \partial_u Z)- g(\partial_v Z; \partial_v Z)=g(\partial_u Z; \partial_v Z)=0$,
 
 \item $ \partial_{uu} z_j +  \partial_{vv} z_j + \sum_{k,l=1}^3 \Gamma_{k\, l}^j 
 \big( \partial_u z_k \partial_u z_l + \partial_v z_k \partial_v z_l \big)=0$, 
 $j=1,2,3$ on $D$.
 
\end{itemize}
Then using \cite[Theorem]{Gul-Les} we obtain that $0$ is not a branch point of $Z$ and 
consequently, $0$ is not a boundary branch point of $X$. Using Theorem \ref{T.Main.Theorem} we can deduce that 
 $S\cup \gamma \cup I_\gamma(S)$ is a smooth  
immersed minimal surface 
which is embedded near $\gamma$ as desired.

\smallskip 

Recall that we can assume that $X$ has no branch 
point on $\delta_0$ except, possibly, at $0$.

We define the map $Y: D^- \longrightarrow M$  setting for any $(u,v)\in D^-$ : 
$Y(u,v):=I_\gamma (X(u,-v))= (x_1,-x_2,-x_3)(u,-v)$.

Then we define the continuous map $Z: D \longrightarrow M$ setting 
\begin{equation*}
 Z(u,v)=
 \begin{cases}
  X(u,v),\ \text{if}\ v\geq 0, \\
  Y(u,v),\ \text{if}\ v\leq 0.
 \end{cases}
\end{equation*}
We have $y_1 (u,v)=x_1(u,-v)$, $y_2 (u,v)= -x_2(u,-v)$ and 
$y_3 (u,v)= -x_3(u,-v)$ on $D^-$.

We want to show that partial derivatives of $X$ and $Y$ are equal along $\delta_0$ up to second order, 
this will prove that $Z$ is $C^2$ on $D$.

\smallskip

Since $X(\delta_0)=\gamma$ we have $x_2(u,0)=x_3(u,0)=0$ along $\delta_0$.  So that 
$\partial_u x_k (u,0)=\partial_u y_k (u,0)=0$ along $\delta_0$, $k=2,3$. Since
$\partial_v y_k (u,v) = \partial_v x_k (u,-v)$ on $D^-$, $k=2,3$, we deduce that 
$z_2$ and $z_3$ are $C^1$ on $D$. 

By construction we have 
$\partial_u x_1 (u,0)=\partial_u y_1 (u,0)$ along $\delta_0$. 

Since $X$ is a conformal map up to $\delta_0$, we have 
$\partial_u x_1 \cdot \partial_v x_1 =0$ along $\delta_0$. Using the fact that 
$\partial_u X$ does not vanish along $\delta_0$, except possibly at $0$, we get that 
$\partial_v x_1 =0$ along $\delta_0$. Since 
$\partial_v y_1 (u,v)=- \partial_v x_1 (u,-v)$ on $D^-$, we get that 
 $\partial_v x_1 (u,0)=\partial_v y_1 (u,0)=0$ along $\delta_0$ and therefore $Z$ is a 
$C^1$ map on $D$.

\smallskip 

By construction for $k=2,3$ we have 
$\partial_{uv} y_k (u,v)=\partial_{uv} x_k (u,-v)$ on $D^-$, so that 
$\partial_{uv} z_2$ and $\partial_{uv} z_3$ are well 
defined and continuous on $D$.

Since $\partial_v x_1=0$ along $\delta_0$, we have 
 $\partial_{uv} x_1(u,0)=0$ too for any $u$. With the relation 
 $\partial_{uv} y_1 (u,v) = -\partial_{uv} x_1 (u,-v)$ on $D^-$ we get that 
 $\partial_{uv} z_1$ is well 
defined and continuous on $D$.
 
 Furthermore we get by construction that  $\partial_{uu} z_2$,  $\partial_{uu} z_3$, 
 $\partial_{uu} z_1$ and $\partial_{vv} z_1$ 
 are continuous on $D$, so that $z_1$ is a $C^2$ function.
 
It remains to show that  that $\partial_{vv} z_2$ and  $\partial_{vv} z_3$ are well 
defined and continuous on $D$.

Since $X$ is a minimal and conformal immersion  of $D^+\setminus \delta_0$ and $X$ has 
$C^2$ regularity up to $\delta_0$, we have on $D^+$, for $j=2,3$:
\begin{equation*}
 \partial_{uu} x_j +  \partial_{vv} x_j + \sum_{k,l=1}^3 \Gamma_{k\, l}^j 
 \big( \partial_u x_k \partial_u x_l + \partial_v x_k \partial_v x_l \big)=0,
\end{equation*}
where the Christoffel symbols are evaluated at $(x_1,x_2,x_3) (u,v)$, $(u,v) \in D^+$.
 
 Using the previous considerations, we get for $j=2$ and for any $(u,0) \in \delta_0$:
\begin{equation*}
 \partial_{vv} x_2(u,0) + \Gamma_{11}^2 \partial_u x_1 (u,0)^2 + 
 \Gamma_{22}^2 \partial_v x_2 (u,0)^2 +  \Gamma_{33}^2 \partial_v x_3 (u,0)^2 + 
 2  \Gamma_{23}^2 \partial_v x_2 \partial_v x_3 (u,0)=0, 
\end{equation*}
now the  Christoffel symbols are evaluated at $\big(x_1 (u,0),0,0\big)$.

 By the way the coordinates $x_1, x_2$ and $x_3$ are chosen, we have 
 $\Gamma_{kk}^2 (x_1,0,0)=0$ for any $x_1$, $k=1,2,3$.
 
 Moreover a computation shows that 
 $\Gamma_{23}^2 =\frac{1}{2g_{22}} \partial_{x_3}g_{22}$ on $\gamma$. Since $I_\gamma$ is an isometry 
for the metric $g$ we have $g_{22}(x_1,-x_2,-x_3)=g_{22}(x_1,x_2,x_3)$ so that 
$\partial_{x_3}g_{22}(x_1,0,0)=0$ for any $x_1$.

Thus $\partial_{vv} x_2(u,0)=0$ along $\delta_0$, we deduce that  
$\partial_{vv} y_2(u,0)=0$ also and then $\partial_{vv} z_2$ is a well defined and 
continuous map on $D$. This shows that $z_2$ is a $C^2$ map on $D$.

 We prove in the same way that $z_3$ is a $C^2$ map on $D$. This achieves the prove. 
\end{proof}

We write  now a typical example in $\hd \times \R$, see 
\cite[Corollary 4.1]{SE-T5}.

\begin{example}
 Let $T\subset \hd$ be a geodesic triangle with sides $A,B$ and $C$. We assign 
constant value $a,b, c$ respectively on 
${\rm interior} (A), {\rm interior} (B)$, ${\rm interior} (C)$. 

We solve the corresponding Dirichlet problem for the vertical minimal equation 
as in 
\cite[Corollary 4.1]{SE-T5}. We call $f$ the solution and $S$ the graph of $f$. 
Thus, $S$ is a minimal surface of $\hd \times \R$.

It is a matter of fact that the boundary $\Gamma$ of  $S$ 
is constituted of the union of three horizontal segments and  
three vertical segments. It turns out that $S$ is the unique minimal surface 
having 
$\Gamma$ as boundary. Therefore $S$ is the solution of the Plateau problem 
for the boundary data $\Gamma$.

Henceforth, by Proposition \ref{P.Plateau}, we can extend $S$ as a minimal surface 
by 
reflection across any horizontal or vertical lines of $\Gamma$.

\smallskip 

We refer to \cite[Example 4.4]{SE-T5} for a simple construction of a 
complete minimal surface of $\hd\times \R$, by solving a certain Dirichlet 
problem and using reflections about horizontal geodesics. The readers are  
also referred to \cite{SE-T7}.

\end{example}

\bigskip 

\section{Appendix}
 We recall some notations.

 For any vector $d=(d_1,\dots,d_k)\in \R^k$, $k\in \N^*$, we set  
$\vert d\vert=\sqrt{d_1^2 + \cdots + d_k^2}$. 

 We identify $\R^{n-1}$ with $ \{x\in \R^n,\ x_n=0 \}$, that is with 
$\R^{n-1} \times \{0 \}$. Therefore we identify any $ y^\prime \in \R^{n-1}$ 
with 
$(y^\prime , 0)\in \R^n$. 

We note also for any $x\in \R^n$ : 
$x=(x^\prime, x_n)$ where $x^\prime \in \R^{n-1}$ and $x_n \in \R$.

For any $R>0$ and for any $ y^\prime \in \R^{n-1} $, 
we set 
\begin{align*}
 B_R^+ & := B_R \cap \{x_n >0  \} = 
 \{x\in \R^n,\ \vert x\vert < R,\, x_n >0 \} \\
\Sigma_R &:= B_R \cap \{x_n =0  \} = 
\{x\in \R^n,\ \vert x\vert < R,\, x_n =0 \} \\
 B_R^+ ( y^\prime) & := \{x=( x^\prime , x_n)\in \R^n,\ \vert x -  
y^\prime\vert 
< R,\ 
x_n>0  \} \\
 \Sigma_R ( y^\prime) &:=  B_R^+ ( y^\prime) \cap \{x_n =0  \} .
\end{align*}

We recall that $w\in C^2(B_{R/2}^+) \cap C^0(B_{R/2}^+ \cup \Sigma_{R/2})$ 
satisfies the linear elliptic equation (\ref{Eq.Linear}). 
Furthermore $w\equiv 0$ on $\Sigma_{R/2}$ and the function 
$v(y):= \dfrac{w(y)}{y_n}$ defined on $B_{R/2}^+$ satisfies the estimate  
(\ref{F.Krylov}). We set 
$\delta:=\frac{\lambda_{\wh K}}{48 n\Lambda_{\wh K}}$, see the proof of 
Theorem \ref{T.Holder-Trudinger}.

\begin{proposition}\label{P.Holder.boundary}
The function $v$ can 
be extended to a continuous function on \linebreak
$B_{\delta R/16}^+ \cup \Sigma_{\delta R/16}$. 
Moreover, for any $ x^\prime,  y^\prime \in \Sigma_{\delta R/64}$ we have 
\begin{equation*}
 \vert v( x^\prime)- v( y^\prime)\vert \leq \frac{ C_1}{R^\alpha} 
 \big( \sup_{B_{R/2}^+} \vert Dw\vert + R \sup_{B_{R/2}^+} \vert \wt f\vert   
\big) \vert  x^\prime-  y^\prime\vert^\alpha ,
\end{equation*}
where $C_1>0$ and $\alpha \in (0,1)$ are the constants  given in the 
inequality (\ref{F.Krylov}), that is 
$C_1:= C_1(n, K, \lvert \varphi\lvert_{C^1(\Omega)}, \lambda_{\wh K},
\Lambda_{\wh K})$ and 
$\alpha=\alpha(n, K, \lvert \varphi\lvert_{C^1(\Omega)}, \lambda_{\wh K},
\Lambda_{\wh K})$.
\end{proposition}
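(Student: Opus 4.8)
The plan is to transfer the interior H\"older estimate \eqref{F.Krylov} for $v$ down to the flat boundary piece $\Sigma_{R/2} \cap \{y_n=0\}$ by a standard "two points on the boundary, one interior anchor" argument. First I would fix two boundary points $x',y' \in \Sigma_{\delta R/64}$ and set $r := |x'-y'|$, which is comparable to their distance; since both points lie in $\Sigma_{\delta R/64}$, the ball $B_{2r}^+(x')$ (say) is contained in $B_{\delta R/16}^+$ provided $r$ is small relative to $\delta R$, and I would check that the radii fit inside the region where \eqref{F.Krylov} is valid, i.e.\ $r \le \frac{\delta}{4}\frac{R}{2}$. The point $y'$ then lies in $B_{cr}^+(x')$ for a fixed constant $c$, and \eqref{F.Krylov} applied with the appropriate scaled radius gives control of $|v(p)-v(q)|$ for interior points $p,q$ approaching $x'$ and $y'$.

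Next I would establish that $v$ actually extends continuously up to $\Sigma_{\delta R/16}$. This is where \eqref{F.Krylov} does the real work: for a fixed $x' \in \Sigma_{\delta R/16}$, the estimate shows that along any sequence of interior points converging to $x'$, the values $v$ form a Cauchy sequence (the oscillation of $v$ on $B_\rho^+(x')$ is bounded by $C\rho^\alpha$ times the fixed quantity $\sup|Dw| + R\sup|\tilde f|$, which tends to $0$ as $\rho \to 0$). Hence the limit $\bar v(x') := \lim v$ exists and is independent of the approaching sequence, and $\bar v$ is continuous on $B_{\delta R/16}^+ \cup \Sigma_{\delta R/16}$ because the same oscillation bound controls $|\bar v(x') - v(p)|$ for interior $p$ near $x'$. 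I would then pass \eqref{F.Krylov} to the limit in both variables to obtain the boundary-to-boundary estimate with the same constants $C_1$ and $\alpha$ (up to an innocuous geometric factor from the ratio of $r$ to the radius used, which one absorbs by shrinking to $\Sigma_{\delta R/64}$, exactly as in the statement).

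The main obstacle — really the only subtle bookkeeping point — is making the radii and the domains of validity line up: one must verify that for $x',y' \in \Sigma_{\delta R/64}$ the relevant half-balls sit inside $B_{\delta R/16}^+$ and that the radius used in \eqref{F.Krylov} does not exceed $\frac{\delta}{4}\frac{R}{2}$, so that one is genuinely allowed to invoke \cite[Theorem 1.2.16]{Han}; the factors $16$ and $64$ in the statement are chosen precisely to give this slack. The recurring remark that $u$ has bounded gradient (hence $Dw$ bounded, hence $\tilde f$ bounded on $B_{R/2}^+$) is what makes the right-hand side of \eqref{F.Krylov} a genuine finite constant, so I would flag that hypothesis explicitly when quoting the estimate. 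Everything else is the routine triangle-inequality interpolation between the interior estimate and the continuous boundary extension.
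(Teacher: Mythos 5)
Your proposal matches the paper's proof in all essentials: both rest entirely on the Krylov estimate (\ref{F.Krylov}), both establish the continuous extension by showing that $v$ along interior sequences converging to a boundary point forms a Cauchy sequence (with the oscillation controlled by $\rho^\alpha$ times the fixed quantity $\sup|Dw|+R\sup|\wt f|$), and both obtain the boundary-to-boundary estimate by applying (\ref{F.Krylov}) on a half-ball of radius $|x'-y'|$ and passing to the limit. Your remarks about verifying the radius constraints ($r\leq\frac{\delta}{4}\frac{R}{2}$, inclusions in $B_{R/2}^+$) and about the role of the bounded-gradient hypothesis are exactly the bookkeeping the paper carries out explicitly, so there is no gap.
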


\begin{proof}
 First observe that for any $y=( y^\prime, y_n)\in B_{\delta R/16}^+$ we 
have 
\begin{equation*}
 B_{y_n}^+( y^\prime) \subset  B_{\frac{\delta}{4}\frac{R}{4}}^+( y^\prime) 
\quad \text{and} \quad  B_{\frac{R}{4}}^+( y^\prime)  \subset B_{R/2}^+. 
\end{equation*}
 Consequently, from  the inequality (\ref{F.Krylov})  applied to the function 
$w$ on 
the half-balls  
 $B_{y_n}^+( y^\prime)$ and $B_{R/4}( y^\prime)$, we deduce that for any 
 $x \in   B_{y_n}^+( y^\prime)$ we have 
\begin{equation}\label{Eq.Estimates}
 \vert v(x)- v(y) \vert \leq \frac{ C_1}{R^\alpha}\, y_n^\alpha 
  \big( \sup_{B_{R/2}^+} \vert Dw\vert + R \sup_{B_{R/2}^+} \vert \wt f\vert   
\big).
\end{equation}

Let $(t_k)$ be a non increasing sequence  of positive real numbers converging 
to $0$. For any 
$p \leq q \in \N$ large enough we deduce from (\ref{Eq.Estimates}) 
(applied to the half-ball $B_{t_q}^+(y^\prime)$), 
that 
\begin{equation*}
 \vert v( y^\prime, t_p) - v( y^\prime, t_q)\vert \leq \frac{C_1}{R^\alpha} 
 t_q ^\alpha 
\big( \sup_{B_{R/2}^+} \vert Dw\vert + R \sup_{B_{R/2}^+} \vert \wt f\vert   
\big).
\end{equation*}
Therefore $(v( y^\prime, t_k))$ is a Cauchy sequence. Consequently 
the sequence 
$(v( y^\prime, t_k))$  converges to some real number, momentarily denoted by  
$h( y^\prime)$. Moreover, the above inequality shows also that the limit 
$h( y^\prime)$ 
does not depend on the positive sequence $(t_k)$ converging to $0$.  

Now let  
$(x_k)=\big( ( x_k^\prime, x_{k,n})\big)$ be a sequence in $B_{\delta R/16}^+$ 
converging 
to $( y^\prime,0)$. We set $\delta_k := \vert x_k - ( y^\prime,0)\vert$. Since 
$x_k\in B_{2\delta_k}^+( y^\prime)$, we deduce from 
(\ref{Eq.Estimates}) that for  $k$ large enough we have 
\begin{align*}
 \vert v(x_k) - h( y^\prime)\vert &\leq \vert v(x_k) - v( y^\prime, 
2\delta_k)\vert + 
 \vert v( y^\prime, 2\delta_k) - h( y^\prime)\vert \\
 &\leq \frac{C_1}{R^\alpha}\, (2\delta_k)^\alpha\, 
 \big( \sup_{B_{R/2}^+} \vert Dw\vert + R \sup_{B_{R/2}^+} \vert \wt f\vert   
\big) + 
   \vert v( y^\prime, 2\delta_k) - h( y^\prime)\vert .
\end{align*}
Therefore we have that $v(x_k) \to h( y^\prime)$. Thus we can extend 
$v$ to a continuous function on $B_{\delta R/16}^+ 
\cup \Sigma_{\delta R/16}$. 

\medskip

Consider now $ x^\prime,  y^\prime \in \Sigma_{\delta R/64}$. Observe 
that 
\begin{equation*}
 B_{\vert  x^\prime- y^\prime\vert}^+ ( y^\prime)
 \subset B_{\frac{\delta}{4} \frac{R}{4}}^+( y^\prime) \quad \text{and} 
\quad 
 B_{R/4}^+( y^\prime)  \subset B_{R/2}^+.
\end{equation*}
Therefore, we get from the inequality (\ref{F.Krylov})  that for any 
$z,z^\prime \in B_{\vert  x^\prime- y^\prime\vert}^+( y^\prime)$ we have 
\begin{equation*}
 \vert v(z) - v(z^\prime)\vert \leq \frac{C_1}{R^\alpha}\, 
 \vert  x^\prime- y^\prime\vert^\alpha 
 \big( \sup_{B_{R/2}^+} \vert Dw\vert + R \sup_{B_{R/2}^+} \vert \wt f\vert   
\big).
\end{equation*}
Now let $( x_k^\prime)$ be any sequence in 
$\Sigma_{\vert  x^\prime- y^\prime\vert}( y^\prime)$ 
 converging to $ x^\prime$ and let $(t_k)$ be a sequence of positive real 
numbers 
converging to 0 such that 
$( x_k^\prime, t_k), ( y^\prime, t_k) \in B_{\vert  x^\prime- 
y^\prime\vert}^+ ( y^\prime)$ for 
any $k$. Therefore, we deduce from the previous inequality that for any $k$ we 
have
\begin{equation*}
 \vert v( x_k^\prime, t_k) - v( y^\prime, t_k)\vert \leq 
 \frac{ C_1}{R^\alpha}\, \vert x^\prime -y^\prime \vert^\alpha 
 \big( \sup_{B_{R/2}^+} \vert Dw\vert + R \sup_{B_{R/2}^+} \vert \wt f\vert   
\big).
\end{equation*}
Letting $k$ go to $+\infty$ we get
\begin{equation*}
 \vert v(x^\prime )- v(y^\prime )\vert \leq \frac{C_1}{R^\alpha} 
 \vert x^\prime - y^\prime \vert^\alpha 
 \big( \sup_{B_{R/2}^+} \vert Dw\vert + R \sup_{B_{R/2}^+} \vert \wt f\vert   
\big),
\end{equation*}
as desired.
\end{proof}
 
\bigskip

For the next result we recall that $\varphi, K$ and $\wh K$ are defined in 
Theorem \ref{T.Holder-Trudinger} and that $\lambda_{\wh K}$ and 
$\Lambda_{\wh K}$ are 
defined in Definition \ref{D.elliptic.quasi}.

We note also that, since $w$ satisfies the linear elliptic equation 
(\ref{Eq.Linear}), the function $v(y)=\dfrac{w(y)}{y_n}$ satisfies on 
$B_{R/2}^+$ the linear elliptic equation 
\begin{equation}\label{Eq.elliptic-2}
\wt L_0 (v) := y_n \sum_{i,j=1}^n \wt \alpha^{ij}(y) D_{ij} v + 2 \sum_{i=1}^n 
\wt \alpha^{in}(y) 
D_i v =\wt f(y)
\end{equation}

\begin{proposition}\label{P.V.Holder}
There exist positive numbers 
$C_2=C_2(n,K,\lvert \varphi\lvert_{C^1(\Omega)}, 
\lambda_{\wh K},\Lambda_{\wh K},R)$ 
 and 
$\beta=\beta(n,K, \lvert \varphi\lvert_{C^1(\Omega)}, 
\lambda_{\wh K},\Lambda_{\wh K})\leq \alpha < 1$,  
such that for any 
$x,y\in B_{\delta R/256} ^+ \cup \Sigma_{\delta R/256}$ we have 
\begin{equation}\label{F.Holder}
 \vert v( x)- v( y)\vert \leq \frac{C_2}{R^\beta} 
 \big( \sup_{B_{R/2}^+} \vert Dw\vert + R \sup_{B_{R/2}^+} \vert \wt f\vert   
\big) \vert  x-  y\vert^\beta .
\end{equation}
\end{proposition}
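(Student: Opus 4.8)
The plan is to bootstrap the \emph{tangential} H\"older estimate \eqref{v.Holder.boundary} of Proposition~\ref{P.Holder.boundary} into a genuine H\"older estimate up to $\Sigma_{\delta R/256}$, by combining it with an interior oscillation estimate for $v$ coming from Krylov--Safonov theory applied to the degenerate equation \eqref{Eq.elliptic-2}. Throughout write $M:=\sup_{B_{R/2}^+}\lvert Dw\rvert+R\sup_{B_{R/2}^+}\lvert\wt f\rvert$ for the quantity appearing on the right of \eqref{F.Holder}, and $\bar x:=(x',0)$ for the orthogonal projection of $x=(x',x_n)$ onto $\{y_n=0\}$; let $\operatorname{osc}_E v$ denote the oscillation of $v$ over a set $E$. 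Since $w\equiv0$ on $\Sigma_{R/2}$, the mean value theorem gives $\lvert v(x)\rvert=\lvert w(x)\rvert/x_n\le\sup_{B_{R/2}^+}\lvert Dw\rvert\le M$ on $B_{R/2}^+$, and Proposition~\ref{P.Holder.boundary} together with \eqref{Eq.Estimates} controls $v$ on $\Sigma$ and in the normal direction: for $x\in B_{\delta R/16}^+$ one has $\lvert v(x)-v(\bar x)\rvert\le\tfrac{C_1}{R^\alpha}x_n^\alpha M$ (let a point run to $\bar x$ inside $B_{x_n}^+(x')$ in \eqref{Eq.Estimates} and use continuity up to $\Sigma$), and for $x',y'\in\Sigma_{\delta R/64}$ one has $\lvert v(x')-v(y')\rvert\le\tfrac{C_1}{R^\alpha}\lvert x'-y'\rvert^\alpha M$.

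Next I would record the interior input. For $y_0=(y_0',y_{0,n})\in B_{R/2}^+$ and $0<4r\le\rho\le y_{0,n}/2$ with $\overline{B_\rho(y_0)}\subset B_{R/2}^+$, dividing \eqref{Eq.elliptic-2} by $y_n$ turns it into a uniformly elliptic non-divergence equation for $v$ on $B_\rho(y_0)$: its leading eigenvalues lie in $[\lambda_{\wh K}/2,\,4\Lambda_{\wh K}]$, its first order coefficients are $O(\Lambda_{\wh K}/y_{0,n})$ — so that the product of their bound with the radius is $O(\Lambda_{\wh K})$, independent of scale — and its right hand side has sup norm at most $2\sup_{B_{R/2}^+}\lvert\wt f\rvert/y_{0,n}$. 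Hence the Krylov--Safonov interior H\"older estimate (see \cite{Han} or \cite[Chapter 9]{G-T}), in its oscillation-decay form, and after shrinking the exponent if necessary so that it does not exceed $\alpha$, provides constants $\beta_0\in(0,\alpha]$ and $c_1>0$ depending only on $n,\lambda_{\wh K},\Lambda_{\wh K}$ such that
\begin{equation*}
 \operatorname{osc}_{B_r(y_0)}v\ \le\ c_1\Bigl(\frac{r}{\rho}\Bigr)^{\beta_0}\Bigl(\operatorname{osc}_{B_\rho(y_0)}v+\frac{\rho}{\lambda_{\wh K}}\,\frac{M}{R}\Bigr),
\end{equation*}
the last term accounting for $\rho^2\sup_{B_\rho(y_0)}\lvert\wt f/y_n\rvert/\lambda_{\wh K}\le\rho\sup_{B_{R/2}^+}\lvert\wt f\rvert/\lambda_{\wh K}\le\rho M/(R\lambda_{\wh K})$.

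Now I would combine the two. Let $x,y\in B_{\delta R/256}^+\cup\Sigma_{\delta R/256}$, say with $x_n\ge y_n\ge0$, and put $d:=\lvert x-y\rvert$, $d_0:=x_n$. If $d$ is comparable to $d_0$ (a dimensional fraction of it suffices), writing $\lvert v(x)-v(y)\rvert\le\lvert v(x)-v(\bar x)\rvert+\lvert v(\bar x)-v(\bar y)\rvert+\lvert v(\bar y)-v(y)\rvert$ and using the three inequalities of the first paragraph (with $x_n,y_n\lesssim d$ and $\lvert\bar x-\bar y\rvert\le d$) gives $\lvert v(x)-v(y)\rvert\le c\,\tfrac{C_1}{R^\alpha}d^\alpha M$. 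If instead $d$ is much smaller than $d_0$, then $x,y$ are interior points with $y\in B_{2d}(x)\subset B_{d_0/4}(x)$, and I apply the interior decay with $y_0=x$, $\rho=d_0/4$, $r=2d$; the \emph{seed} $\operatorname{osc}_{B_{d_0/4}(x)}v$ is bounded by the same device applied to two points of $B_{d_0/4}(x)$ (their heights are $\le5d_0/4$ and their projections are $\le d_0/2$ apart), so $\operatorname{osc}_{B_{d_0/4}(x)}v\le 3\,\tfrac{C_1}{R^\alpha}(5d_0/4)^\alpha M$, while the additive term is $\le\tfrac{1}{4\lambda_{\wh K}}\tfrac{d_0^\alpha}{R^\alpha}M$ since $d_0\le R$. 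Therefore
\begin{equation*}
 \lvert v(x)-v(y)\rvert\ \le\ \operatorname{osc}_{B_{2d}(x)}v\ \le\ c\,\Bigl(\frac{8d}{d_0}\Bigr)^{\beta_0}\frac{d_0^\alpha}{R^\alpha}\,M\ =\ c\,\frac{d^{\beta_0}d_0^{\,\alpha-\beta_0}}{R^\alpha}\,M\ \le\ \frac{c}{R^{\beta_0}}\,d^{\beta_0}M ,
\end{equation*}
the last step using $\beta_0\le\alpha$ and $d_0\le R$. In both cases, since $\beta_0\le\alpha$ and $d\le R$, the bound takes the form $\tfrac{C_2}{R^{\beta_0}}d^{\beta_0}M$; taking $\beta:=\beta_0\ (\le\alpha)$ gives \eqref{F.Holder}, with $\beta$ depending on $n,K,\lvert\varphi\rvert_{C^1(\Omega)},\lambda_{\wh K},\Lambda_{\wh K}$ and $C_2$ on the same data and $R$.

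The point that requires care is the $R$-scaling: the bare interior estimate would only give $\operatorname{osc}_{B_r(x)}v\lesssim(r/d_0)^{\beta_0}\sup\lvert v\rvert$, whose prefactor $d_0^{-\beta_0}$ blows up as $x$ approaches $\Sigma$. It is precisely the substitution, as the seed of the interior iteration, of the \emph{decaying} quantity $\operatorname{osc}_{B_{d_0/4}(x)}v\lesssim R^{-\alpha}d_0^\alpha M$ furnished by Proposition~\ref{P.Holder.boundary}, together with the choice $\beta=\min(\beta_0,\alpha)$ which lets $d_0^{\,\alpha-\beta_0}\le R^{\,\alpha-\beta_0}$ be absorbed, that yields the clean estimate \eqref{F.Holder}. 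It then remains only to check, by routine bookkeeping, that all the balls and half-balls used stay inside the regions where \eqref{F.Krylov} and Proposition~\ref{P.Holder.boundary} apply.
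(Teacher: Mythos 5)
Your proposal is correct and follows essentially the same route as the paper's proof. Both arguments combine (i) the boundary estimates of Proposition~\ref{P.Holder.boundary} together with \eqref{F.Krylov}/\eqref{Eq.Estimates} to control $v$ along $\Sigma$ and in the normal direction, (ii) an interior Krylov--Safonov estimate applied to the degenerate equation \eqref{Eq.elliptic-2} on a ball of radius comparable to the height (you use the oscillation-decay form; the paper invokes \cite[Corollary 9.24]{G-T}, which is equivalent), and (iii) a case split comparing $\lvert x-y\rvert$ with the larger of the two heights, with the seed of the interior iteration controlled by the boundary Hölder estimate and the exponent taken as $\min(\alpha,\beta_0)$ to absorb the $d_0^{-\beta_0}$ factor. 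Your closing remark about why the interior estimate alone would fail and the seed must decay like $R^{-\alpha}d_0^\alpha M$ is precisely the point the paper's computation is engineered to exploit.
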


\begin{proof}
We are going to consider successively the cases 
$x,y \in \Sigma_{\delta R/256}$, 
$x\in \Sigma_{\delta R/256}$, $y\in  B_{\delta R/256}^+$  and 
$x,y\in  B_{\delta R/256}^+$.

\medskip 

\noindent {\bf Case $x,y \in \Sigma_{\delta R/256}$.}

We identify  $x= (x^\prime ,0)$ with $x^\prime $ and $y= (x^\prime ,0)$ with 
$y^\prime $.

Thanks to Proposition \ref{P.Holder.boundary} we have 
\begin{equation*}
 \vert v( x^\prime)- v( y^\prime)\vert \leq \frac{ C_1}{R^\alpha} 
 \big( \sup_{B_{R/2}^+} \vert Dw\vert + R \sup_{B_{R/2}^+} \vert \wt f\vert   
\big) \vert  x^\prime-  y^\prime\vert^\alpha ,
\end{equation*}

\medskip 

\noindent {\bf Case $x\in \Sigma_{\delta R/256}$ and 
$y\in  B_{\delta R/256}^+$.}

We identify  $x= (x^\prime ,0)$ with $x^\prime $. Using inequality 
(\ref{Eq.Estimates}) 
and 
Proposition \ref{P.Holder.boundary} we have 
\begin{align*}
 \vert v(x^\prime ) - v(y)\vert & \leq \vert v(x^\prime ) - v(y^\prime )\vert + 
 \vert v(y^\prime ) -v(y)\vert \\
  &\leq \frac{ C_1}{R^\alpha} 
 \big( \sup_{B_{R/2}^+} \vert Dw\vert + R \sup_{B_{R/2}^+} \vert \wt f\vert   
\big) \vert  x^\prime-  y^\prime\vert^\alpha + 
\frac{ C_1}{R^\alpha} \, y_n^\alpha 
 \big( \sup_{B_{R/2}^+} \vert Dw\vert + R \sup_{B_{R/2}^+} \vert \wt f\vert   
\big)  \\
   &\leq 2\frac{ C_1}{R^\alpha} 
 \big( \sup_{B_{R/2}^+} \vert Dw\vert + R \sup_{B_{R/2}^+} \vert \wt f\vert   
\big) \vert  x^\prime-  y\vert^\alpha .
\end{align*}

\medskip 

\noindent {\bf Case $x,y\in  B_{\delta R/256}^+$.}

We can assume that $x_n \leq y_n$.

We are going 
to consider separately the cases $\vert  x -  y\vert < y_n/4$ and 
$\vert  x -  y\vert \geq y_n/4$. 

\medskip 

\noindent {\bf Assume first that   $\vert  x -  y\vert < y_n/4$. }

Thus we have 
\begin{equation*}
 B_{\vert  x -  y\vert}(y)  \subset B_{y_n/2}(y)
 \subset B_{3y_n/5}(y)
 \subset B_{R/2}^+.
\end{equation*}
Recall that $v$ satisfies the linear elliptic equation (\ref{Eq.elliptic-2}).

We are going to apply the extension of the Krylov-Safonov H\"older estimate 
done by Gilbarg and Trudinger, 
Corollary 9.24 of \cite{G-T}, to the function $v$ with 
$\Omega, B_{R_0}$ and $B_R$ (of Corollary 9.24) substituted respectively by 
$B_{3y_n/5}(y)$, $B_{y_n/2}(y)$ and $B_{\vert x -y\vert}(y)$ (that is 
$R_0=y_n/2$ and $R=\vert x -y\vert$).

Following the notations of \cite[Section 9.7]{G-T}, the principal part of $\wt 
L_0$ is given 
by 
the symmetric matrix $\big(z_n \wt \alpha^{ij}(z)\big)$,
$z\in \Omega=B_{3y_n/5}(y)$. The functions $b_i, i=1,\dots,n$, are given by 
$b_i=2\wt \alpha^{in}$ (therefore $\vert b_i\vert \leq 
8\Lambda_{\wh K}$, see Equation (\ref{Eq.Linear})),
and $c=0$. For any $z\in \Omega=B_{3y_n/5}(y)$ we 
set  $\lambda_0(z) = z_n \dfrac{\lambda_{\wh K}}{2}$ and  
$\Lambda_0(z) = 4z_n \Lambda_{\wh K}$. 
Thus, 
for 
 any eigenvalue 
 $\rho_0 (z)$ of the symmetric matrix $\big(z_n \wt \alpha^{ij}(z)\big)$, we 
have 
 $\lambda_0(z) \leq \rho_0 (z) \leq \Lambda_0 (z)$, see the discussion after 
Equation (\ref{Eq.Linear}).  
  Therefore we can choose 
 $\gamma =  8\frac{\Lambda_{\wh K}}{\lambda_{\wh K}}$ 
to achieve 
\begin{equation*}
  \frac{\Lambda_0}{\lambda_0} \leq \gamma.
\end{equation*}

 For any $z\in \Omega=B_{3y_n/5}(y)$ we have 
\begin{equation*}
 \left( \frac{\vert b (z)\vert}{\lambda_0(z)}  \right)^2 \leq 
 n \left( 16 \frac{\Lambda_{\wh K}}{z_n \lambda_{\wh K}} \right)^2 \leq 
  n  \left( \frac{\Lambda_{\wh K}} {\lambda_{\wh K}} \right)^2  \, 
    \frac{40^2}{y_n^2}.
\end{equation*}
 Therefore we can choose 
 $\nu :=  n  \left( \frac{\Lambda_{\wh K}} {\lambda_{\wh K}} 
\right)^2  \, 
    \frac{40^2}{y_n^2}$
to achieve 
\begin{equation*}
  \left( \frac{\vert b \vert}{\lambda_0}  \right)^2 \leq \nu
\end{equation*}
on $B_{3y_n/5}(y)$.

\

Thus, in Corollary 9.24 of \cite{G-T} we have 
$\nu R_0^2= 400n \left( \frac{\Lambda_{\wh K}} {\lambda_{\wh K}} 
\right)^2 $, since $R_0= y_n/2$. 
Therefore, using  Corollary 9.24 of \cite{G-T} 
we obtain 
\begin{equation}\label{Eq.Local}
 \vert v(x) - v(y) \vert \leq C_1^\prime
 \frac{\vert  x -  y\vert^\eta}{y_n^\eta}
 \left( \text{osc}_{B_{y_n/2}(y)} v + 
 \omega_n y_n^2 \sup_{B_{y_n/2}(y)} 
\vert \wt f\vert \right)
\end{equation}
where $\omega_n$ is a constant depending only on $n$, and 
$C_1' >0$ and $\eta \in (0,1)$ are constant real numbers depending only 
on 
$n$, $\gamma$ and 
$\nu (y_n/2)^2 = 400n\left(\frac{\Lambda_{\wh K}}{\lambda_{\wh K}}\right)^2$, 
that is 
depending only on $n$ and $\frac{\Lambda_{\wh K}}{\lambda_{\wh K}}$. 

On the other hand we have
\begin{align*}
 \text{osc}_{B_{y_n/2}(y)} v &= \text{osc}_{B_{y_n/2}(y)} \big(v-v(y^\prime 
)\big) 
\\
&\leq 2 \sup_{z\in  B_{y_n/2}(y)} \vert v(z)-v(y^\prime )\vert \\
&\leq 2 \sup_{z\in  B_{y_n/2}(y)} \vert v(z)-v(z^\prime )\vert 
 + 2 \sup_{z\in  B_{y_n/2}(y)} \vert v(z^\prime )-v(y^\prime )\vert ,
\end{align*}
where $z=(z^\prime, z_n)$.

Observe that since $y\in  B_{\delta R/256}^+$ we have 
\begin{equation*}
 \Sigma_{y_n/2}(y^\prime ) \subset \Sigma_{\delta R/64}.
\end{equation*}
Consequently, for any $z\in B_{y_n/2}(y)$ we deduce from 
Proposition \ref{P.Holder.boundary}
\begin{align*}
 \vert v(z^\prime )- v(y^\prime )\vert &\leq 
  \frac{ C_1}{R^\alpha} 
 \big( \sup_{B_{R/2}^+} \vert Dw\vert + R \sup_{B_{R/2}^+} \vert \wt f\vert   
\big) \vert  z^\prime-  y^\prime\vert^\alpha \\
&\leq  \frac{ C_1}{R^\alpha} \, y_n^\alpha
 \big( \sup_{B_{R/2}^+} \vert Dw\vert + R \sup_{B_{R/2}^+} \vert \wt f\vert   
\big) . \\
\end{align*}
Note that for any $z\in B_{y_n/2}(y)$ we have 
\begin{equation*}
 B_{ z_n}^+(z^\prime ) \subset  B_{\frac{\delta}{4}\frac{R}{4}}^+(z^\prime ) 
\quad 
\text{and} \quad  B_{\frac{R}{4}}^+(z^\prime )  \subset B_{R/2}^+. 
\end{equation*}
 Moreover, by Proposition \ref{P.Holder.boundary} $v$ extends 
continuously to $z^\prime $. We deduce from the inequality (\ref{F.Krylov}) 
that 
for  any $z\in B_{y_n/2}(y)$ we have
\begin{align*}
 \left\vert v(z) - v(z^\prime)  \right\vert &\leq 
 \frac{C_1}{  R^\alpha}\, z_n^\alpha \big( 
\sup_{B_{R/2}^+} \vert Dw\vert + 
{  R}\sup_{B_{R/2}^+} \vert \wt f\vert   \big)  \\
&\leq \left(\frac{3}{2}\right)^\alpha \frac{C_1}{  R^\alpha}\, y_n^\alpha \big( 
\sup_{B_{R/2}^+} \vert Dw\vert + 
{  R}\sup_{B_{R/2}^+} \vert \wt f\vert   \big).
\end{align*}
Therefore we obtain
\begin{equation}\label{Eq.Local-2}
  \text{osc}_{B_{y_n/2}(y)} v \leq 5
  \frac{C_1}{  R^\alpha}\, y_n^\alpha \big( 
\sup_{B_{R/2}^+} \vert Dw\vert + 
{  R}\sup_{B_{R/2}^+} \vert \wt f\vert   \big).
\end{equation}
Now we set $\beta=\min (\alpha,\eta)$. Since 
$\vert x-y\vert \leq y_n/4$ we have 
\begin{equation*}
 \frac{\vert  x -  y\vert^\eta}{y_n^\eta} \leq 
 \frac{\vert  x -  y\vert^\beta}{y_n^\beta} \qquad \text{and} \qquad 
 \frac{y_n^\alpha}{R^\alpha} \leq  \frac{y_n^\beta}{R^\beta} .
\end{equation*}
Therefore we deduce from (\ref{Eq.Local})  and (\ref{Eq.Local-2}) 
that 
\begin{align*}
 \vert v(x) - v(y) \vert &\leq C_1^\prime 
 \frac{\vert  x -  y\vert^\beta}{y_n^\beta}
  \left( 5
  \frac{C_1}{  R^\beta}\, y_n^\beta \big( 
\sup_{B_{R/2}^+} \vert Dw\vert + 
{  R}\sup_{B_{R/2}^+} \vert \wt f\vert   \big)
   +  
\omega_n y_n^2 \sup_{B_{y_n/2}(y)} 
\vert \wt f\vert \right) \\
 &\leq C_1^\prime 
 \vert  x -  y\vert^\beta
  \left( 5
  \frac{C_1}{  R^\beta}\big( 
\sup_{B_{R/2}^+} \vert Dw\vert + 
{  R}\sup_{B_{R/2}^+} \vert \wt f\vert   \big)
   +  
\omega_n y_n^{2-\beta} \sup_{B_{y_n/2}(y)} 
\vert \wt f\vert \right) \\
&\leq C_1^\prime 
 \vert  x -  y\vert^\beta
  \left( 5
  \frac{C_1}{  R^\beta}\big( 
\sup_{B_{R/2}^+} \vert Dw\vert + 
{  R}\sup_{B_{R/2}^+} \vert \wt f\vert   \big)
   +  
\omega_n R^{2-\beta} \sup_{B_{y_n/2}(y)} 
\vert \wt f\vert \right). \\
\end{align*}
Consequently, setting $C_1^{\prime\prime}= C_1^\prime\big(5C_1  + \omega_n 
R\big)$, we obtain
\begin{equation*}\label{Eq.Local-3}
  \vert v(x) - v(y) \vert \leq \frac{C_1^{\prime\prime}}{R^\beta}  
 \Big( 
\sup_{B_{R/2}^+} \vert Dw\vert + 
{  R}\sup_{B_{R/2}^+} \vert \wt f\vert   \Big)
 \vert  x -  y\vert^\beta 
\end{equation*}
where $C_1^{\prime\prime} = C_1^{\prime\prime}(n, K, 
\lvert \varphi\lvert_{C^1(\Omega)}, \lambda_{\wh K},\Lambda_{\wh K},R)>0$  
 and 
$\beta=\beta  (n, K, 
\lvert \varphi\lvert_{C^1(\Omega)}, \lambda_{\wh K},\Lambda_{\wh K}) 
\in (0,1)$, $\beta \leq \alpha$.

\medskip

\noindent {\bf  Assume now that $\vert  x -  y\vert \geq y_n /4$.}

Recall that we are also assuming that $x_n \leq y_n$. 

Observe that  
\begin{align*}
 B_{x_n }^+(x^\prime )  \subset B_{\frac{\delta}{4}\frac{R}{8}}^+(x^\prime ) 
 \subset B_{\delta R/16}^+
\qquad \text{and}\qquad  B_{R/8}^+(x^\prime ) \subset B_{R/2}^+, \\
 B_{y_n }^+(y^\prime )  \subset B_{\frac{\delta}{4}\frac{R}{8}}^+(y^\prime ) 
 \subset B_{\delta R/16}^+
\qquad \text{and}\qquad  B_{R/8}^+(y^\prime ) \subset B_{R/2}^+.
\end{align*}
Since $v$ can be extended to a continuous function on 
$B_{\delta R/16}^+ \cup \Sigma_{\delta R/16}$ we get from the inequality 
(\ref{F.Krylov})  applied successively on the half balls 
$ B_{x_n }^+(x^\prime )$ and $ B_{y_n }^+(y^\prime )$

\begin{align*}
 \left\vert v(x) - v(x^\prime)  \right\vert  & \leq 
 \frac{C_1}{  R^\alpha}x_n^\alpha \big( 
\sup_{B_{R/2}^+} \vert Dw\vert + 
{  R}\sup_{B_{R/2}^+} \vert \wt f\vert   \big) \\
 \left\vert v(y) - v(y^\prime)  \right\vert  & \leq 
 \frac{C_1}{  R^\alpha}y_n^\alpha \big( 
\sup_{B_{R/2}^+} \vert Dw\vert + 
{  R}\sup_{B_{R/2}^+} \vert \wt f\vert   \big). \\
\end{align*}
Moreover, since $x^\prime,y^\prime\in \Sigma_{\delta R/64}$,  
Proposition \ref{P.Holder.boundary} gives
\begin{equation*}
 \vert v( x^\prime)- v( y^\prime)\vert \leq \frac{ C_1}{R^\alpha} 
 \big( \sup_{B_{R/2}^+} \vert Dw\vert + R \sup_{B_{R/2}^+} \vert \wt f\vert   
\big) \vert  x^\prime-  y^\prime\vert^\alpha .
\end{equation*}
Therefore, using the above inequalities, $x_n \leq y_n$, 
$y_n \leq 4\vert x-y\vert$ and 
$\vert x^\prime-y^\prime\vert \leq \vert x-y\vert$, 
 we get 
\begin{align*}
  \vert v( x)- v( y)\vert &\leq \vert v(x)- v(x^\prime ) \vert  + 
  \vert v(x^\prime )- v(y^\prime )\vert + \vert v(y^\prime )- v(y) \vert \\
  &\leq \frac{ C_1}{R^\alpha} \big( 2 y_n^\alpha + 
  \vert x^\prime - y^\prime \vert^\alpha  \big) 
  \big( \sup_{B_{R/2}^+} \vert Dw\vert + R \sup_{B_{R/2}^+} \vert \wt f\vert   
\big)\\ 
&\leq 9 \frac{ C_1}{R^\alpha}
\big( \sup_{B_{R/2}^+} \vert Dw\vert + R \sup_{B_{R/2}^+} \vert \wt f\vert   
\big) \vert x-y\vert^\alpha \\
&\leq 9 \frac{ C_1}{R^\beta}
\big( \sup_{B_{R/2}^+} \vert Dw\vert + R \sup_{B_{R/2}^+} \vert \wt f\vert   
\big) \vert x-y\vert^\beta,
\end{align*}
since $\frac{\vert x-y\vert^\alpha}{R^\alpha}\leq 
\frac{\vert x-y\vert^\beta}{R^\beta}$.

\bigskip 

Setting $C_2:=\max \big(9C_1, C_1^{\prime\prime}\big)$ and 
considering each 
case, 
 we conclude that for any 
$x,y \in  B_{\delta R/256}^+ \cup \Sigma_{\delta R/256}$ we have 
\begin{equation*}
 \vert v( x)- v( y)\vert \leq \frac{C_2}{R^\beta} 
 \big( \sup_{B_{R/2}^+} \vert Dw\vert + R \sup_{B_{R/2}^+} \vert \wt f\vert   
\big) \vert  x-  y\vert^\beta .
\end{equation*}
By construction, $C_2$  depends on 
$n,K,\lvert \varphi\lvert_{C^1(\Omega)}, 
\lambda_{\wh K},\Lambda_{\wh K}$ and $R$, and  $\beta$ depends on 
$n,K$, $\lvert \varphi\lvert_{C^1(\Omega)}, 
\lambda_{\wh K}$ and $\Lambda_{\wh K}$, 
as desired.
\end{proof}

\end{document}